\theoremstyle{plain}
\newtheorem{theorem}{Theorem}[section]
\newtheorem{lemma}[theorem]{Lemma}
\newtheorem{proposition}[theorem]{Proposition}
\newtheorem{corollary}[theorem]{Corollary}
\theoremstyle{definition}
\newtheorem{example}[theorem]{Example}
\newtheorem{definition}[theorem]{Definition}
\theoremstyle{remark}
\DeclareMathOperator{\reg}{reg}
\DeclareMathOperator{\ureg}{ureg}
\DeclareMathOperator{\dom}{dom}
\DeclareMathOperator{\codom}{codom}
\begin{document}

\title[semigroups of transformations with an invariant set]
{On certain semigroups of transformations with an invariant set}

\author[Mosarof Sarkar]{Mosarof Sarkar}
\address{Department of Mathematics, Central University of South Bihar, Gaya, India}
\email{mosarofsarkar@cusb.ac.in}

\author[Shubh N. Singh]{Shubh N. Singh}
\address{Department of Mathematics, Central University of South Bihar, Gaya, India}
\email{shubh@cub.ac.in}


\begin{abstract}

Let $X$ be a nonempty set and let $T(X)$ be the full transformation semigroup on $X$. 
The main objective of this paper is to study the subsemigroup $\overline{\Omega}(X, Y)$ of $T(X)$ defined by
\[\overline{\Omega}(X, Y) = \{f\in T(X)\colon Yf = Y\},\]
where $Y$ is a fixed nonempty subset of $X$. We describe regular elements in $\overline{\Omega}(X, Y)$ and show that $\overline{\Omega}(X, Y)$ is regular if and only if $Y$ is finite. We characterize unit-regular elements in $\overline{\Omega}(X, Y)$ and prove that $\overline{\Omega}(X, Y)$ is unit-regular if and only if $X$ is finite. We characterize Green's relations on $\overline{\Omega}(X, Y)$ and prove that $\mathcal{D} =\mathcal{J}$ on $\overline{\Omega}(X, Y)$ if and only if $Y$ is finite. We also determine ideals of $\overline{\Omega}(X, Y)$ and investigate its kernel. This paper extends several results appeared in the literature.

\end{abstract}


\subjclass[2010]{20M20, 20M17, 20M12.}
\keywords{Transformation semigroups, regular elements, unit-regular elements, Green's relations, Ideals, kernel.}

\maketitle
\section{Introduction}

Throughout this paper, let $X$ be a nonempty set and let $Y$ be a fixed nonempty subset of $X$. Denote by $T(X)$ the full transformation semigroup on $X$. It is well-known that $T(X)$ is regular \cite[p. 63, Exercise 15]{howie95}, and $T(X)$ is unit-regular if and only if $X$ is finite \cite[Proposition 5]{alar-s80}. A characterization of Green's relations on $T(X)$ is well-known \cite[p. 63, Exercise 16]{howie95}. In $1952$, Mal'cev \cite{mal-52} determined ideals of $T(X)$. A \emph{transformation semigroup} is a subsemigroup of $T(X)$. Various transformation semigroups have been introduced and extensively studied for several decades, as every semigroup is isomorphic to a transformation semigroup \cite[Theorem 1.1.2]{howie95}.

\vspace{0.1cm}

In $1966$, Magill \cite{magill66} introduced and studied an interesting transformation semigroup $\overline{T}(X, Y)$ defined by
\[\overline{T}(X, Y) = \{f \in T(X)\colon Yf \subseteq Y\},\] where $Yf$ is the image of $Y$ under $f$. In $2005$, Nenthein et al.  proved that $\overline{T}(X, Y)$ is regular if and only if $X = Y$ or $|Y| = 1$ \cite[Corollary 2.4]{nenth05}, and they characterized regular elements in $\overline{T}(X, Y)$ \cite[Theorem 2.3]{nenth05}. Honyam and Sanwong \cite{hony11} determined Green's relations on $\overline{T}(X, Y)$ and characterized its ideals. Sarkar and Singh gave a characterization of unit-regular elements in $\overline{T}(X, Y)$ \cite[Theorem 4.2]{shubh-lma21} and also proved that $\overline{T}(X, Y)$ is unit-regular if and only if $X$ is finite, and $|Y| = 1$ or $Y = X$ \cite[Theorem 4.4]{shubh-lma21}.

\vspace{0.1cm}

Denote by $S(X)$ (resp. $I_X$) the symmetric group (resp. identity map) on $X$. Honyam and Sanwong \cite{hony-qm13} and Laysirikul \cite{laysi16} introduced, respectively, the transformation semigroups $\text{Fix}(X, Y)$ and $\overline{S}(X, Y)$ defined by
\[\text{Fix}(X, Y) = \{f \in T(X)\colon f_{\upharpoonright_Y} = I_Y\} \hspace{0.3cm}\text{ and } \hspace{0.3cm}
\overline{S}(X, Y) = \{f \in T(X)\colon f_{\upharpoonright_Y}\in S(Y)\},\]
where $f_{\upharpoonright_Y}\colon Y \to Y$ is the map that agrees with $f$ on $Y$.
Honyam and Sanwong \cite{hony-qm13} observed that $\text{Fix}(X, Y)$ is regular and characterized its Green's relations and ideals.  Chaiya et al. \cite[Theorem 5.2]{chaiya-t17} proved that $\mbox{ Fix}(X,Y)$ is unit-regular if and only if $X\setminus Y$ is finite. Laysirikul \cite[Theorem 2.2]{laysi16} showed that $\overline{S}(X, Y)$ is regular. Sommanee \cite{somma-tjm21} gave a description of Green's relations on $\overline{S}(X, Y)$ and characterize its ideals.

\vspace{0.1cm}
Denote by $\Omega(X)$ the semigroup under composition of all surjective transformations on $X$. In \cite{koni-ac19}, Konieczny characterized Green's relations on $\Omega(X)$ and investigated its kernel. In particular, Konieczny \cite{koni-ac19} proved that the kernel of $\Omega(X)$ exists and determined its elements and cardinality.


\vspace{0.1cm}
Let
\[\overline{\Omega}(X,Y)=\{f\in T(X)\colon Yf = Y\}.\]
Clearly $\overline{\Omega}(X,Y)$ is a subsemigroup of $\overline{T}(X, Y)$. If $Y = X$, then $\overline{\Omega}(X,Y) = \Omega(X)$. Therefore we may regard $\overline{\Omega}(X,Y)$ as a generalization of $\Omega(X)$. Moreover, if $Y$ is finite, then $\overline{S}(X,Y) = \overline{\Omega}(X,Y)$ (cf. \cite[Proposition 1.1.3]{gan-maz09}). By definition of $\overline{T}(X,Y)$, $\overline{S}(X,Y)$, $\text{Fix}(X,Y)$, and $\overline{\Omega}(X,Y)$, we can see that 
\[\text{Fix}(X,Y) \subseteq \overline{S}(X,Y) \subseteq  \overline{\Omega}(X,Y) \subseteq \overline{T}(X,Y).\]


\vspace{0.1cm}
The main purpose of this paper is to study the semigroup $\overline{\Omega}(X, Y)$. We divide the paper into five sections.
In Section $2$, we present basic definitions and introduce remaining notation used within the paper. In Section $3$, we first describe regular elements in $\overline{\Omega}(X, Y)$ and then show that $\overline{\Omega}(X, Y)$ is regular if and only if $Y$ is finite. Next, we characterize unit-regular elements in both semigroups $\overline{S}(X, Y)$ and $\overline{\Omega}(X, Y)$, and show that the sets of all unit-regular elements in $\overline{S}(X, Y)$ and $\overline{\Omega}(X, Y)$ are equal. 
Also, we prove that $\overline{S}(X, Y)$ (resp. $\overline{\Omega}(X, Y)$) is unit-regular if and only if $X\setminus Y$ (resp. $X$) is finite. In Section $4$, we characterize Green's relations on $\overline{\Omega}(X, Y)$ and prove that $\mathcal{D} = \mathcal{J}$ on $\overline{\Omega}(X, Y)$ if and only if $Y$ is finite. 
In Section $5$, we characterize ideals of $\overline{\Omega}(X, Y)$ and determine its kernel.


\section{Preliminaries and Notation}

Let $A$ be a set. We denote by $|A|$ the cardinality of $A$ and write $A\setminus B = \{x\in A\colon x\notin B\}$, where $B$ is a set. A \emph{partition} of $A$ is a collection of pairwise disjoint nonempty subsets of $A$, called \emph{blocks}, whose union is $A$. A \emph{transversal} of an equivalence relation $\rho$ on $A$ is a subset of $A$ which contains exactly one element of each $\rho$-class. We denote by $I_A$ the identity map on $A$.

\vspace{0.1cm}

We denote the composition of maps by juxtaposition, and we compose maps from left to right. Let $f$ be a map. We write $\dom(f)$ (resp. $\codom(f)$) to denote the domain (resp. codomain) of $f$. For $x\in \dom(f)$, we write $xf$ for the image of $x$ under $f$. If $B \subseteq \dom(f)$ and $C\subseteq \codom(f)$, we let $Bf = \{xf \colon x\in B\}$ and $Cf^{-1} = \{x\in \dom(f)\colon xf \in C\}$. We simply write $xf^{-1}$ instead of $Cf^{-1}$ when $C = \{x\}\subseteq \codom(f)$. Let $f$ be a map and $\dom(f) = A$. We denote by $\ker(f)$ the equivalence relation on $A$ defined by $(a,b)\in \ker(f)$ if and only if $af = bf$. We write $T_{f}$ to denote a transversal of $\ker(f)$. Note that $|T_f| = |Af|$.  Let $f\colon A \to A$ be a map and $B \subseteq \dom(f)$. The \emph{restriction} of $f$ to $B$ is a map $f_{|_B} \colon B \to A$ defined by $x(f_{|_B}) = xf$ for all $x \in B$. If $Bf \subseteq B$, we denote by $f_{\upharpoonright_B}$ the map from $B$ to $B$ which agrees with $f$ on $B$. Note that if $f\in \overline{\Omega}(X,Y)$ (resp. $f\in \overline{S}(X,Y)$), then $f_{\upharpoonright_Y}\in \Omega(Y)$ (resp. $f_{\upharpoonright_Y}\in S(Y)$).

\vspace{0.1cm}

Let $S$ be a semigroup with identity. Denote by $U(S)$ the set of all units in $S$.
An element $x\in S$ is called \emph{regular} (resp. \emph{unit-regular}) if $xyx = x$ for some $y\in S$ (resp. $y\in U(S)$). Let $\reg(S)$ (resp. $\ureg(S)$) denote the set of all regular (resp. unit-regular) elements in $S$. The semigroup $S$ is said to be \emph{regular} (resp. \emph{unit-regular}) if $\reg(S) = S$ (resp. $\ureg(S) = S$). 
Note that $U(T(X)) = S(X)$, and if $f\in U(\overline{T}(X,Y))$, then $f_{\upharpoonright_Y}\in S(Y)$.

\vspace{0.1cm}

Let $S$ be a semigroup. As usual, we denote Green's relations on $S$ by $\mathcal{L}$, $\mathcal{R}$, $\mathcal{H}$, $\mathcal{D}$, and $\mathcal{J}$. If $a\in S$ and $\mathcal{T}$ is one of Green's relations on $S$, then we denote by $T_a$ the equivalence class of $a$ with respect to $\mathcal{T}$. An ideal $I$ of $S$ is said to be \emph{minimal} if there is no ideal of $S$ that is strictly contained in $I$. Note that a semigroup contains at most one minimal ideal. However, if a semigroup $S$ contains a minimal ideal, then such unique minimal ideal of $S$ is called the \emph{kernel} of $S$ and is denoted by $K(S)$.

\vspace{0.1cm}
For all undefined notions and notation, we refer the reader to \cite{clifford61, howie95}.

\section{Regular and Unit-regular elements}
 
In this section, we first describe regular elements in $\overline{\Omega}(X, Y)$ and show that $\overline{\Omega}(X, Y)$ is regular if and only if $Y$ is finite. Next, we characterize unit-regular elements in both semigroups $\overline{S}(X, Y)$ and $\overline{\Omega}(X, Y)$, and prove that the sets of all unit-regular elements in $\overline{S}(X, Y)$ and $\overline{\Omega}(X, Y)$ are equal. We prove that $\overline{S}(X, Y)$  (resp. $\overline{\Omega}(X, Y)$) is unit-regular if and only if $X\setminus Y$ (resp. $X$ ) is finite. Finally, we give new proofs of known results about regularity and unit-regularity of $\mbox{ Fix}(X,Y)$.

\vspace{0.1cm}
The following simple lemma is included here for completeness.
\vspace{0.1cm}
\begin{lemma}\label{trans-image}
	Let $f\colon X \to X$ be a map and let $T_f$ be a transversal of $\ker(f)$. Then
	
	\begin{enumerate}
		\item[\rm(i)] $f$ is injective if and only if  $X\setminus T_f = \emptyset$;
		\item[\rm(ii)] $f$ is surjective if and only if  $X\setminus Xf = \emptyset$;
		\item[\rm(iii)] $f$ is bijective if and only if $X\setminus T_f =\emptyset$ and $X\setminus Xf = \emptyset$; 
		\item[\rm(iv)] if $f$ is injective but not surjective, then $|X\setminus T_f| \neq |X\setminus Xf|$. 
	\end{enumerate}
\end{lemma}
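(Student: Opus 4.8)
The plan is to prove each part directly from the definitions, using the basic fact (already noted in the Preliminaries) that $|T_f| = |Xf|$ for any map $f\colon X\to X$ and transversal $T_f$ of $\ker(f)$. For part (i): $f$ is injective precisely when $\ker(f)$ is the identity relation, which happens precisely when every $\ker(f)$-class is a singleton, i.e.\ when $T_f$ meets each class in its unique element and hence $T_f = X$; equivalently $X\setminus T_f=\emptyset$. For part (ii): $f$ is surjective means $Xf = X$, which is literally the statement $X\setminus Xf=\emptyset$. Part (iii) is then immediate by combining (i) and (ii), since $f$ is bijective iff it is both injective and surjective.

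The only part requiring an actual argument is (iv). Suppose $f$ is injective but not surjective. By (i), $X\setminus T_f=\emptyset$, so $|X\setminus T_f| = 0$. By (ii), $X\setminus Xf\neq\emptyset$, so $|X\setminus Xf|\geq 1$. Hence $|X\setminus T_f| = 0 \neq |X\setminus Xf|$, and the two cardinalities differ. (Note that injectivity forces $T_f = X$, so $|X\setminus T_f|$ is always $0$ in this situation, regardless of whether $X$ is finite or infinite; the point of the lemma is precisely that one cannot ``cancel'' to conclude $|X\setminus T_f| = |X\setminus Xf|$ would be needed for bijectivity — a subtlety that matters when $X$ is infinite, since $|Xf| = |X|$ can hold even though $Xf\neq X$.)

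I expect no real obstacle here: the statement is elementary and the excerpt itself flags it as ``simple'' and ``included for completeness.'' The only place to be slightly careful is to resist the temptation, in (iv), to argue via a cardinality count like $|X| = |T_f|$ and $|X| = |Xf| + |X\setminus Xf|$ — such an additive decomposition is false for infinite sets, which is exactly why (iv) is stated as a genuine (if easy) lemma rather than a triviality. The clean route is simply to observe that injectivity pins $|X\setminus T_f|$ to $0$ while non-surjectivity forces $|X\setminus Xf|$ to be nonzero, so the two are unequal.
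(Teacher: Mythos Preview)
Your proof is correct. The paper itself does not supply a proof of this lemma at all---it is stated without proof, with the remark that it is ``included here for completeness''---so there is nothing to compare against; your argument fills in exactly the elementary verification the authors chose to omit.
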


\vspace{0.1cm}

In general, the subset $\reg(S)$ of a semigroup $S$ does not form a subsemigroup of $S$.
However, in the next result, we show that $\reg(\overline{\Omega}(X,Y))$ is a subsemigroup of $\overline{\Omega}(X,Y)$.

\begin{theorem}\label{regular-OXY}
We have $\reg(\overline{\Omega}(X,Y))=\overline{S}(X,Y)$.
\end{theorem}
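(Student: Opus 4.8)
The plan is to prove two inclusions. For the easier inclusion $\reg(\overline{\Omega}(X,Y)) \subseteq \overline{S}(X,Y)$, suppose $f \in \overline{\Omega}(X,Y)$ is regular, so $f = fgf$ for some $g \in \overline{\Omega}(X,Y)$. Restricting to $Y$ and using that $Yf = Y$ and $Yg = Y$, we get $f_{\upharpoonright_Y} = f_{\upharpoonright_Y} g_{\upharpoonright_Y} f_{\upharpoonright_Y}$ in $\Omega(Y)$, so $f_{\upharpoonright_Y}$ is a regular element of the semigroup $\Omega(Y)$. Since $f_{\upharpoonright_Y}$ is already surjective on $Y$, I want to conclude it is in fact bijective, i.e. $f_{\upharpoonright_Y} \in S(Y)$, which gives $f \in \overline{S}(X,Y)$. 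The point here is that an idempotent-related or direct argument shows a surjective self-map of $Y$ that is von Neumann regular within $\Omega(Y)$ must be injective: from $f_{\upharpoonright_Y} g_{\upharpoonright_Y} f_{\upharpoonright_Y} = f_{\upharpoonright_Y}$ and surjectivity of $f_{\upharpoonright_Y}$ we may cancel on the right (surjections are right-cancellable) to get $f_{\upharpoonright_Y} g_{\upharpoonright_Y} = I_Y$, whence $f_{\upharpoonright_Y}$ is injective (it has a left inverse). So $f_{\upharpoonright_Y} \in S(Y)$.

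For the reverse inclusion $\overline{S}(X,Y) \subseteq \reg(\overline{\Omega}(X,Y))$, let $f \in \overline{S}(X,Y)$, so $f_{\upharpoonright_Y} \in S(Y)$, i.e. $f$ permutes $Y$. I must construct $g \in \overline{\Omega}(X,Y)$ with $fgf = f$. The idea is to build $g$ so that it "undoes" $f$ on the image of $f$. Pick a transversal $T_f$ of $\ker(f)$; since $f$ permutes $Y$, every $\ker(f)$-class meeting $Y$ is a singleton contained in $Y$, so we may and do choose $T_f \supseteq Y$ — more precisely, arrange that $T_f \cap Y = Y$ and the remaining transversal elements lie in $X \setminus Y$. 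Define $g$ on $Xf$ by sending each $z \in Xf$ to the unique element of $T_f$ mapping to $z$; in particular $g$ restricted to $Yf = Y$ is $(f_{\upharpoonright_Y})^{-1} \in S(Y)$. On $X \setminus Xf$ (if nonempty), define $g$ to send every such point into $Y$, say to a fixed point of $Y$; this forces $Yg = Y$ (it already contains $Y$ since $g$ maps $Y$ onto $Y$ via the inverse permutation) so $g \in \overline{\Omega}(X,Y)$. Then for any $x \in X$: $xf \in Xf$, so $(xf)g \in T_f$ is the transversal representative of the $\ker(f)$-class of $x$, hence $(xf)gf = xf$. Thus $fgf = f$ and $f$ is regular.

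Finally, since $\overline{S}(X,Y)$ is closed under composition, this identification incidentally yields the remark preceding the theorem that $\reg(\overline{\Omega}(X,Y))$ is a subsemigroup. The main obstacle is the bookkeeping in the second inclusion: one must verify carefully that the transversal can be chosen to contain $Y$ (using that $f$ is injective on $Y$ and $Yf = Y$, so no $\ker(f)$-class other than those of points of $Y$ can meet $Y$), and that the extension of $g$ off $Xf$ genuinely lands inside a map satisfying $Yg = Y$ rather than merely $Yg \subseteq Y$ — handled by noting $g$ already maps $Y$ bijectively onto $Y$, so any additional values dumped into $Y$ do no harm. Everything else is a routine check that $fgf = f$ pointwise.
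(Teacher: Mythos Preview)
Your approach is correct in outline and is more self-contained than the paper's: for the inclusion $\overline{S}(X,Y)\subseteq\reg(\overline{\Omega}(X,Y))$ the paper simply invokes Laysirikul's theorem that $\overline{S}(X,Y)$ is regular, and for the reverse inclusion it cites an external lemma to the effect that $Y(f_{\upharpoonright_Y}g_{\upharpoonright_Y})$ is a transversal of $\ker(f_{\upharpoonright_Y})$, then observes that this transversal equals $Y$, so $f_{\upharpoonright_Y}$ is injective. Your direct cancellation argument and your explicit construction of $g$ achieve the same ends without external references.

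Two small slips should be fixed. First, in the cancellation step: in the paper's left-to-right composition convention, surjections are \emph{left}-cancellable, not right-cancellable, so from $f_{\upharpoonright_Y}(g_{\upharpoonright_Y}f_{\upharpoonright_Y})=f_{\upharpoonright_Y}\cdot I_Y$ you legitimately obtain $g_{\upharpoonright_Y}f_{\upharpoonright_Y}=I_Y$, not $f_{\upharpoonright_Y}g_{\upharpoonright_Y}=I_Y$. Either conclusion suffices (from $g_{\upharpoonright_Y}f_{\upharpoonright_Y}=I_Y$ you get $g_{\upharpoonright_Y}$ injective, hence bijective since it lies in $\Omega(Y)$, so $f_{\upharpoonright_Y}=(g_{\upharpoonright_Y})^{-1}$ is bijective; alternatively, $f_{\upharpoonright_Y}g_{\upharpoonright_Y}$ is a surjective idempotent on $Y$, hence the identity), but the stated justification is backwards. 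Second, it is \emph{not} true that every $\ker(f)$-class meeting $Y$ is a singleton contained in $Y$: such a class may well contain points of $X\setminus Y$. What is true, and what you actually need, is that each $\ker(f)$-class meets $Y$ in at most one point (because $f_{\upharpoonright_Y}$ is injective), and this is precisely what allows a transversal $T_f$ with $Y\subseteq T_f$ to be chosen. With these corrections your argument goes through.
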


\begin{proof}
	Note that $\overline{S}(X,Y)$ is a regular semigroup \cite[Theorem 2.2]{laysi16}. Since $\overline{S}(X,Y)\subseteq \overline{\Omega}(X,Y)$, it follows that $\overline{S}(X,Y)= \reg(\overline{S}(X,Y))\subseteq \reg(\overline{\Omega}(X,Y))$. To show the reverse inclusion, let $f\in \reg(\overline{\Omega}(X,Y))$. Then there exists $g\in \overline{\Omega}(X,Y)$ such that $fgf=f$. This gives $f_{\upharpoonright_Y}g_{\upharpoonright_Y}f_{\upharpoonright_Y}=f_{\upharpoonright_Y}$ and so $Y(f_{\upharpoonright_Y}g_{\upharpoonright_Y})$ is a transversal of $\ker(f_{\upharpoonright_Y})$ by \cite[Lemma 3.1]{shubh-s1-21}. Since $f_{\upharpoonright_Y}, g_{\upharpoonright_Y} \in \Omega(Y)$, it is clear that $Y(f_{\upharpoonright_Y}g_{\upharpoonright_Y})=Y$. Since $Y(f_{\upharpoonright_Y}g_{\upharpoonright_Y})$ is a transversal of $\ker(f_{\upharpoonright_Y})$ and $Y(f_{\upharpoonright_Y}g_{\upharpoonright_Y})=Y$, it follows that
	$Y\setminus Y(f_{\upharpoonright_Y}g_{\upharpoonright_Y})= \emptyset$ and so $f_{\upharpoonright_Y}$ is injective by Lemma \ref{trans-image}(i). Hence $f_{\upharpoonright_Y}\in S(Y)$ and subsequently $f\in \overline{S}(X,Y)$ as required.
\end{proof}

\vspace{0.1cm}
Note that $\overline{S}(X,Y) =\overline{\Omega}(X,Y)$ if and only if $Y$ is finite (cf. \cite[Proposition 1.1.3]{gan-maz09}). Using \cite[Theorem 2.2]{laysi16} together with Theorem \ref{regular-OXY}, we obtain the following result.

\begin{proposition}\label{reg-OXY}
The semigroup $\overline{\Omega}(X,Y)$ is regular if and only if $Y$ is finite.
\end{proposition}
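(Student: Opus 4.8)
The plan is to derive Proposition \ref{reg-OXY} by combining Theorem \ref{regular-OXY} with the elementary fact that $\overline{S}(X,Y) = \overline{\Omega}(X,Y)$ precisely when $Y$ is finite. By Theorem \ref{regular-OXY}, $\overline{\Omega}(X,Y)$ is regular if and only if $\reg(\overline{\Omega}(X,Y)) = \overline{\Omega}(X,Y)$, i.e.\ if and only if $\overline{S}(X,Y) = \overline{\Omega}(X,Y)$. So everything reduces to proving this set equality holds exactly when $Y$ is finite.

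For the forward-type reasoning, suppose $Y$ is finite. Any $f \in \overline{\Omega}(X,Y)$ satisfies $Yf = Y$, so $f_{\upharpoonright_Y} \colon Y \to Y$ is a surjection from a finite set to itself, hence a bijection; thus $f_{\upharpoonright_Y} \in S(Y)$ and $f \in \overline{S}(X,Y)$. Since the reverse containment $\overline{S}(X,Y) \subseteq \overline{\Omega}(X,Y)$ always holds, we get equality, and then Theorem \ref{regular-OXY} gives regularity. Conversely, suppose $Y$ is infinite; I would exhibit an element of $\overline{\Omega}(X,Y) \setminus \overline{S}(X,Y)$, which by Theorem \ref{regular-OXY} is a non-regular element, showing $\overline{\Omega}(X,Y)$ is not regular. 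Since $Y$ is infinite there is a surjection $h \colon Y \to Y$ that is not injective; extend it to $f \colon X \to X$ by setting $f$ to agree with $h$ on $Y$ and to be the identity on $X \setminus Y$ (or any fixed point of $Y$ on $X\setminus Y$). Then $Yf = Yh = Y$, so $f \in \overline{\Omega}(X,Y)$, but $f_{\upharpoonright_Y} = h \notin S(Y)$, so $f \notin \overline{S}(X,Y) = \reg(\overline{\Omega}(X,Y))$.

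There is no real obstacle here: the proposition is essentially a corollary of the already-proved Theorem \ref{regular-OXY} plus the cited remark \cite[Proposition 1.1.3]{gan-maz09} on surjections of finite sets. The only point requiring a line of care is the construction of the non-injective surjection $h$ on an infinite set $Y$ and its extension to all of $X$ while keeping $Yf = Y$; this is routine (e.g.\ identify $Y$ with a set admitting a proper self-surjection and extend by the identity outside $Y$). I would write the proof in two short sentences, citing Theorem \ref{regular-OXY} and the remark immediately preceding the proposition, and spelling out the infinite case only as far as needed to name the witness element.
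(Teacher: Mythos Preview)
Your proposal is correct and follows essentially the same route as the paper: the paper derives the proposition directly from Theorem \ref{regular-OXY} together with the remark (cited as \cite[Proposition 1.1.3]{gan-maz09}) that $\overline{S}(X,Y)=\overline{\Omega}(X,Y)$ if and only if $Y$ is finite. The only difference is cosmetic---you spell out the equivalence $\overline{S}(X,Y)=\overline{\Omega}(X,Y)\Leftrightarrow Y$ finite by hand (pigeonhole for the finite case, an explicit non-injective surjection extended by the identity for the infinite case), whereas the paper simply cites it.
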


In the next result, we characterize unit-regular elements in $\overline{S}(X,Y)$.

\begin{theorem}\label{ureg-element-SXY}
Let $f\in \overline{S}(X,Y)$. Then $f\in \ureg (\overline{S}(X,Y))$ if and only if  $|X\setminus T_f|=|X\setminus Xf|$ for some transversal $T_f$ of $\ker(f)$ such that $Y\subseteq T_f$.
\end{theorem}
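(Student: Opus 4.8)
The plan is to prove both directions by explicitly constructing the relevant transformation. For the forward direction, suppose $f \in \ureg(\overline{S}(X,Y))$, so there exists a unit $g \in U(\overline{S}(X,Y))$ with $fgf = f$. Since $g$ is a unit in $\overline{S}(X,Y)$, $g$ is a permutation of $X$ (one should check that $U(\overline{S}(X,Y)) = \{h \in S(X) \colon Yh = Y\}$, which follows from the general remarks in Section 2 about units of $\overline{T}(X,Y)$ together with $Yg = Y$). From $fgf = f$ one gets, as in the proof of Theorem \ref{regular-OXY}, that $Xfg$ is a transversal of $\ker(f)$; call it $T_f := Xfg$. The key points are: (a) since $g$ restricts to a permutation of $Y$ and $Yf = Y$, we have $Yfg = Y$, so $Y \subseteq T_f$; and (b) since $g$ is a bijection of $X$, the map $f$ followed by $g$ sends $Xf$ bijectively onto $T_f$ and sends $X \setminus Xf$ bijectively onto $X \setminus T_f$, whence $|X \setminus T_f| = |X \setminus Xf|$. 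This is essentially a counting identity once one observes $g$ permutes $X$ and matches $Xf \leftrightarrow T_f$.

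For the converse, suppose $|X \setminus T_f| = |X \setminus Xf|$ for some transversal $T_f$ of $\ker(f)$ with $Y \subseteq T_f$. I would build a permutation $g$ of $X$ witnessing unit-regularity. On $Xf$, define $g$ to invert $f$ in the obvious way: for each $y \in Xf$ let $yg$ be the unique element of $T_f$ with $(yg)f = y$. Since $Y \subseteq T_f$ and $f_{\upharpoonright_Y} \in S(Y)$ (as $f \in \overline{S}(X,Y)$), for $y \in Y$ the unique preimage in $T_f$ already lies in $Y$, so $g$ maps $Y$ bijectively onto $Y$; this is what forces $g \in \overline{S}(X,Y)$ once we finish defining it. On the complement, use the hypothesis $|X \setminus T_f| = |X \setminus Xf|$ to choose any bijection from $X \setminus Xf$ onto $X \setminus T_f$ and let $g$ agree with it there. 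Then $g$ is a bijection of $X$ (it maps $Xf$ bijectively onto $T_f$ and $X \setminus Xf$ bijectively onto $X \setminus T_f$), $Yg = Y$, so $g \in U(\overline{S}(X,Y))$, and by construction $fgf = f$ because $fg$ acts as the identity on $T_f$ which is a transversal of $\ker(f)$. Hence $f \in \ureg(\overline{S}(X,Y))$.

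The main obstacle, and the step requiring the most care, is the interplay between the set $T_f$ and the subset $Y$: one must verify that a transversal can be chosen containing $Y$, that $g$ restricted to $Y$ lands back in $Y$ (using $f_{\upharpoonright_Y} \in S(Y)$), and — in the forward direction — that the transversal $T_f = Xfg$ arising from a given unit $g$ actually contains $Y$. Everything else is the routine bijection-counting argument already familiar from unit-regularity characterisations in $T(X)$ and $\overline{T}(X,Y)$, adapted to keep track of the constraint on $Y$. It may also be worth recording the small lemma that, for $h \in S(X)$ and any $B \subseteq X$, $|X \setminus B| = |X \setminus Bh|$, which packages the counting step cleanly.
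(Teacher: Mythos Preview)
Your proof is correct and complete, but it takes a genuinely different route from the paper. The paper does not construct anything directly: it invokes the existing characterisation of unit-regular elements in $\overline{T}(X,Y)$ from \cite[Theorem 4.2]{shubh-lma21}, which involves a transversal $T_f$ for which $Y\cap T_f$ is a transversal of $\ker(f_{\upharpoonright_Y})$ together with the equality $|(X\setminus T_f)\setminus (Y\setminus T_{f_{\upharpoonright_Y}})| = |(X\setminus Xf)\setminus (Y\setminus Yf_{\upharpoonright_Y})|$, and then observes that for $f\in\overline{S}(X,Y)$ the map $f_{\upharpoonright_Y}$ is a bijection, so $Y\setminus T_{f_{\upharpoonright_Y}}$ and $Y\setminus Yf_{\upharpoonright_Y}$ are empty and $Y\cap T_f=Y$; both directions then collapse to the stated condition, and the inclusion $U(\overline{T}(X,Y))\subseteq\overline{S}(X,Y)$ closes the converse. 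Your argument, by contrast, is self-contained: in the forward direction you read off $T_f=Xfg$ from a given unit $g$ and use that $g$ permutes $X$ and fixes $Y$ setwise; in the converse you build the unit $g$ explicitly by inverting $f_{|_{T_f}}$ on $Xf$ and filling in a bijection $X\setminus Xf\to X\setminus T_f$. Your approach is more elementary and makes the role of the hypothesis $Y\subseteq T_f$ completely transparent; the paper's approach has the virtue of exhibiting the result as a clean specialisation of the $\overline{T}(X,Y)$ theory, at the cost of depending on that external reference.
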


\begin{proof}
Suppose that $f\in \ureg(\overline{S}(X,Y))$. Since $\ureg(\overline{S}(X,Y)) \subseteq \ureg(\overline{T}(X,Y))$, we have $f\in \ureg(\overline{T}(X,Y))$. Then, by \cite[Theorem 4.2]{shubh-lma21}, there exists a transversal $T_f$ of $\ker(f)$ such that $ Y\cap T_f$ is a transversal of $\ker(f_{\upharpoonright_Y})$ and 
\begin{equation}\label{size-trans-diff}
|(X\setminus T_f)\setminus (Y\setminus T_{f_{\upharpoonright_Y}})| = |(X\setminus Xf)\setminus (Y\setminus Yf_{\upharpoonright_Y})| 
\end{equation}
where $T_{f_{\upharpoonright_Y}} = Y\cap T_f$ is a transversal of $\ker(f_{\upharpoonright_Y})$. Since $f_{\upharpoonright_Y} \in S(Y)$, the set $Y$ is the only transversal of $\ker(f_{\upharpoonright_Y})$. This implies that $Y = Y\cap T_f$ and so $Y \subseteq T_f$. Further, since $f_{\upharpoonright_Y} \in S(Y)$, it follows from Lemma \ref{trans-image}(iii) that
$Y\setminus T_{f_{\upharpoonright_Y}} = \emptyset$ and $Y\setminus Yf_{\upharpoonright_Y} = \emptyset$. Then, by using  equation (\ref{size-trans-diff}), we obtain \[|X\setminus T_f| = |(X\setminus T_f)\setminus (Y\setminus T_{f_{\upharpoonright_Y}})| = |(X\setminus Xf)\setminus (Y\setminus Yf_{\upharpoonright_Y})| = |X\setminus Xf|.\] 
 
Conversely, suppose that $|X\setminus T_f|=|X\setminus Xf|$ for some transversal $T_f$ of $\ker(f)$ such that $Y\subseteq T_f$. Since $f_{\upharpoonright_Y} \in S(Y)$, the set $Y$ is the only transversal of $\ker(f_{\upharpoonright_Y})$. Write $T_{f_{\upharpoonright_Y}} = Y$. By hypothesis, since $Y\subseteq T_f$, we then get $T_{f_{\upharpoonright_Y}} = Y \cap T_f$. Further, since $f_{\upharpoonright_Y} \in S(Y)$, it follows from Lemma \ref{trans-image}(iii) that $Y\setminus T_{f_{\upharpoonright_Y}} = \emptyset$ and $Y\setminus Y f_{\upharpoonright_Y} = \emptyset$. Therefore, by hypothesis, we obtain
\[|(X\setminus T_f)\setminus (Y\setminus T_{f_{\upharpoonright_Y}})| =  |X\setminus T_f| = |X\setminus Xf| = |(X\setminus Xf)\setminus (Y\setminus Yf_{\upharpoonright_Y})|.\]
In addition, since $f_{\upharpoonright_Y}\in S(Y)$, we simply have $f_{\upharpoonright_Y}\in \ureg(T(Y))$ and $Yf_{\upharpoonright_Y} = Y \cap Xf$. 
Therefore, by \cite[Theorem 4.2]{shubh-lma21}, we get $f\in \ureg(\overline{T}(X,Y))$.
Since $U(\overline{T}(X,Y))\subseteq \overline{S}(X,Y)$, we conclude that $f\in \ureg(\overline{S}(X,Y))$.
\end{proof}

\vspace{0.1cm}
In the next result, we show that $\ureg(\overline{S}(X,Y))$ and $\ureg(\overline{\Omega}(X,Y))$ are equal. 

\begin{theorem}\label{ureg-OXY}
We have	$\ureg(\overline{\Omega}(X,Y))=\ureg(\overline{S}(X,Y))$.
\end{theorem}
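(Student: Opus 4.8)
The plan is to deduce the identity from Theorem \ref{regular-OXY} together with the fact that the two semigroups have the same group of units. The first step is to record that
\[
U(\overline{\Omega}(X,Y)) = U(\overline{S}(X,Y)) = \{g\in S(X)\colon Yg = Y\}.
\]
This is routine: a unit of either monoid is a bijection of $X$ (both contain $I_X$ and are subsemigroups of $T(X)$), and for $g\in S(X)$ one has $Yg = Y$ if and only if $g\in \overline{S}(X,Y)$, in which case $Yg^{-1} = Y$ as well; hence $g$ is a unit of $\overline{\Omega}(X,Y)$ iff $Yg = Y$ iff $g$ is a unit of $\overline{S}(X,Y)$.

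Granting this, the inclusion $\ureg(\overline{S}(X,Y)) \subseteq \ureg(\overline{\Omega}(X,Y))$ is immediate: if $f\in \overline{S}(X,Y)$ and $fgf = f$ with $g\in U(\overline{S}(X,Y))$, then $f\in \overline{S}(X,Y)\subseteq \overline{\Omega}(X,Y)$ and $g\in U(\overline{\Omega}(X,Y))$, whence $f\in \ureg(\overline{\Omega}(X,Y))$.

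For the reverse inclusion, let $f\in \ureg(\overline{\Omega}(X,Y))$, say $fgf = f$ with $g\in U(\overline{\Omega}(X,Y))$. Then $f$ is a regular element of $\overline{\Omega}(X,Y)$, so $f\in \reg(\overline{\Omega}(X,Y)) = \overline{S}(X,Y)$ by Theorem \ref{regular-OXY}. Since $g\in U(\overline{\Omega}(X,Y)) = U(\overline{S}(X,Y))$ and $fgf = f$, we conclude $f\in \ureg(\overline{S}(X,Y))$, which finishes the proof.

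I do not anticipate any serious difficulty here: the real content is carried by Theorem \ref{regular-OXY}, and the only point requiring (minor) care is the identification of the two unit groups — in particular checking that a unit of $\overline{\Omega}(X,Y)$ actually restricts to a permutation of $Y$ and hence already belongs to $\overline{S}(X,Y)$.
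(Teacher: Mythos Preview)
Your proof is correct and follows essentially the same approach as the paper: both use Theorem~\ref{regular-OXY} to place a unit-regular element of $\overline{\Omega}(X,Y)$ inside $\overline{S}(X,Y)$, and both use the coincidence (or, in the paper, the inclusion $U(\overline{\Omega}(X,Y))\subseteq U(\overline{S}(X,Y))$) of the unit groups to transfer the witnessing unit. The only difference is that you spell out the identification of the two unit groups explicitly, whereas the paper simply asserts the needed inclusion.
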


\begin{proof}
	Since $\overline{S}(X,Y)\subseteq \overline{\Omega}(X,Y)$, it follows that $\ureg(\overline{S}(X,Y))\subseteq \ureg(\overline{\Omega}(X,Y))$. To show the reverse inclusion, let $f\in \ureg(\overline{\Omega}(X,Y))$. Then $f\in \reg(\overline{\Omega}(X,Y))$ and so $f\in \overline{S}(X,Y)$ by Theorem \ref{regular-OXY}. Since $f\in \ureg(\overline{\Omega}(X,Y))$, there exists $g\in U(\overline{\Omega}(X,Y))$ such that $fgf = f$. Note that $U(\overline{\Omega}(X,Y))\subseteq U(\overline{S}(X,Y))$. Therefore $g\in U(\overline{S}(X,Y))$ and so $f\in \ureg(\overline{S}(X,Y))$ as required.
\end{proof}

\vspace{0.1cm}
Using Theorems \ref{ureg-element-SXY} and \ref{ureg-OXY}, we obtain the following corollary. 
\begin{corollary}\label{ureg-element-OXY}
Let $f\in \overline{\Omega}(X,Y)$. Then $f\in \ureg (\overline{\Omega}(X,Y))$ if and only if  $|X\setminus T_f|=|X\setminus Xf|$ for some transversal $T_f$ of $\ker(f)$ such that $Y\subseteq T_f$.
\end{corollary}

\vspace{0.1cm}

\begin{lemma}\label{ureg-l-FXY}
If $X\setminus Y$ is an infinite set, then there exists $f\in \text{Fix}(X, Y)$ such that $f\notin \ureg(\overline{S}(X,Y))$.
\end{lemma}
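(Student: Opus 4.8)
The plan is to exhibit a single explicit map $f\in\text{Fix}(X,Y)$ that is injective on all of $X$ but fails to be surjective, and then to invoke Theorem~\ref{ureg-element-SXY} (through the inclusion $\text{Fix}(X,Y)\subseteq\overline{S}(X,Y)$) to conclude that $f$ cannot be unit-regular in $\overline{S}(X,Y)$. The intuition is that an injective non-surjective transformation of an infinite set forces the ``defect of injectivity'' and the ``defect of surjectivity'' to have different cardinalities, which is exactly what the unit-regularity criterion forbids.

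First I would use the hypothesis that $X\setminus Y$ is infinite to choose a sequence $a_0,a_1,a_2,\dots$ of pairwise distinct elements of $X\setminus Y$, and define $f\colon X\to X$ by $a_if=a_{i+1}$ for every $i\ge 0$ and $xf=x$ for every $x\in X\setminus\{a_0,a_1,a_2,\dots\}$. Since $Y$ is disjoint from $\{a_0,a_1,a_2,\dots\}$, the map $f$ fixes $Y$ pointwise, so $f_{\upharpoonright_Y}=I_Y$ and hence $f\in\text{Fix}(X,Y)\subseteq\overline{S}(X,Y)$. A routine check shows that $f$ is injective, while $a_0\notin Xf$ shows that $f$ is not surjective.

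Next, because $f$ is injective, $\ker(f)$ is the identity relation on $X$, so its only transversal is $T_f=X$ itself; in particular $Y\subseteq T_f$ and $|X\setminus T_f|=0$. On the other hand $|X\setminus Xf|\ge 1$ because $a_0\in X\setminus Xf$ (this is just the situation of Lemma~\ref{trans-image}(iv)). Consequently there is no transversal $T_f$ of $\ker(f)$ satisfying $Y\subseteq T_f$ and $|X\setminus T_f|=|X\setminus Xf|$, so Theorem~\ref{ureg-element-SXY} yields $f\notin\ureg(\overline{S}(X,Y))$, which is the desired conclusion.

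I do not expect a real obstacle in this argument. The only point that needs a moment of care is that the criterion in Theorem~\ref{ureg-element-SXY} is an \emph{existential} statement over transversals of $\ker(f)$ containing $Y$; but the injectivity of $f$ makes such a transversal unique (namely $X$), so checking the negation of the criterion reduces to the single inequality $0\neq|X\setminus Xf|$.
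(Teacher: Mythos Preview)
Your proof is correct and follows essentially the same approach as the paper's: both construct an injective but non-surjective $f\in\text{Fix}(X,Y)$ and then apply Theorem~\ref{ureg-element-SXY}, noting that injectivity forces $T_f=X$ to be the unique transversal. The only cosmetic difference is that the paper uses an abstract injective non-surjective map $\alpha\colon X\setminus Y\to X\setminus Y$, whereas you build one explicitly as a shift on a countable sequence; the logical content is identical.
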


\begin{proof}
If $X\setminus Y$ is an infinite set, then there exists a map $\alpha \colon X\setminus Y\to X\setminus Y$ which is injective but not surjective. Define $f\colon X\to X$ by 	
	\begin{align*}
	xf=
	\begin{cases}
		x & \text{ if $x\in Y$},\\
		x\alpha & \text{ if $x\in X\setminus Y$.}
	\end{cases}
\end{align*}
It is obvious that $f\in \text{Fix}(X, Y)$. Since $\alpha$ is injective, it follows that $f$ is injective and so $X$ is the only transversal of $\ker(f)$. Write $T_f = X$. Then $Y\subseteq T_f$ and $|X\setminus T_f|= 0$. In addition, since $\alpha$ is not surjective, the map $f$ is not surjective and so $|X\setminus Xf|\ge 1$. This implies that $|X\setminus T_f|\neq |X\setminus Xf|$ where $T_f$ is the only transversal of $\ker(f)$ and $Y\subseteq T_f$. Hence $f\notin \ureg(\overline{S}(X,Y))$ by Theorem \ref{ureg-element-SXY}. This completes the proof of the lemma.
\end{proof}

\vspace{0.1cm}
Our next result characterizes unit-regularity of $\overline{S}(X,Y)$.
\begin{theorem}\label{ureg-SXY}
	The semigroup $\overline{S}(X,Y)$ is unit-regular if and only if $X\setminus Y$ is finite.
\end{theorem}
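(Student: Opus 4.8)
The plan is to prove both directions using the characterization of unit-regular elements in $\overline{S}(X,Y)$ from Theorem~\ref{ureg-element-SXY}, together with Lemma~\ref{ureg-l-FXY}. The backward direction ($X\setminus Y$ finite $\Rightarrow$ $\overline{S}(X,Y)$ unit-regular) is the substantive part; the forward direction will be a quick contrapositive.

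For the forward direction, suppose $X\setminus Y$ is infinite. Then Lemma~\ref{ureg-l-FXY} produces an element $f\in \text{Fix}(X,Y)\subseteq \overline{S}(X,Y)$ with $f\notin \ureg(\overline{S}(X,Y))$, so $\overline{S}(X,Y)$ is not unit-regular. That is all that is needed here.

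For the backward direction, assume $X\setminus Y$ is finite and take an arbitrary $f\in \overline{S}(X,Y)$; I must verify the criterion of Theorem~\ref{ureg-element-SXY}, namely that there is a transversal $T_f$ of $\ker(f)$ with $Y\subseteq T_f$ and $|X\setminus T_f| = |X\setminus Xf|$. First I would build such a transversal: since $f_{\upharpoonright_Y}\in S(Y)$, each $\ker(f)$-class meets $Y$ in at most one point, so I can choose $T_f$ to contain all of $Y$ together with one representative from each $\ker(f)$-class disjoint from $Y$; then $Y\subseteq T_f$ and $X\setminus T_f\subseteq X\setminus Y$ is finite. Likewise $Xf\supseteq Yf_{\upharpoonright_Y} = Y$, so $X\setminus Xf\subseteq X\setminus Y$ is also finite. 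Now the key point is a counting argument on finite sets: writing $X = Y \sqcup (X\setminus Y)$ with $|X\setminus Y| = n < \infty$, I want to compare $|X\setminus T_f|$ and $|X\setminus Xf|$. The map $f$ restricted to the finitely many $\ker(f)$-classes meeting $X\setminus Y$ is, after collapsing, a map on a finite set, and a standard finiteness fact (a map between finite sets of equal cardinality, here governed by the partition of $X\setminus Y$ into classes versus its image outside $Y$) gives $|X\setminus T_f| = |X\setminus Xf|$. Concretely: the complement $X\setminus T_f$ consists of the non-representative points in the classes meeting $X\setminus Y$, i.e.\ $n$ minus the number of such classes; while $X\setminus Xf = (X\setminus Y)\setminus Yf = (X\setminus Y)\setminus\big((X\setminus Y)f \cup \text{(stuff)}\big)$, and since $f_{\upharpoonright_Y}$ is a bijection of $Y$, no point of $X\setminus Y$ is hit from $Y$, so $X\setminus Xf = (X\setminus Y)\setminus (X\setminus Y)f$ has size $n$ minus the number of distinct images, which equals $n$ minus the number of classes meeting $X\setminus Y$. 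Hence the two cardinalities agree, and Theorem~\ref{ureg-element-SXY} yields $f\in\ureg(\overline{S}(X,Y))$.

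The main obstacle I anticipate is getting the finite bookkeeping exactly right: one must be careful that the $\ker(f)$-classes partitioning $X$ interact correctly with the splitting $X = Y\sqcup(X\setminus Y)$ — in particular that every class meeting $Y$ does so in a single point and that $Yf_{\upharpoonright_Y}=Y$ forces all images of $X\setminus Y$ that land outside $Y$ to come only from $X\setminus Y$. Once that structural picture is fixed, the equality $|X\setminus T_f| = |X\setminus Xf|$ is just the statement that for a function on a finite set, the number of "extra" domain points (size minus number of fibers) equals the number of "missed" codomain points (size minus image size), applied to the finite set $X\setminus Y$ with the induced partition and the induced (not necessarily everywhere-defined) map. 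I would state this as a short self-contained sublemma about finite sets if it streamlines the exposition, or else inline it.
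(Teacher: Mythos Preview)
Your overall strategy matches the paper's: the forward direction is the same contrapositive via Lemma~\ref{ureg-l-FXY}, and for the converse you build a transversal $T_f\supseteq Y$ and aim to verify $|X\setminus T_f|=|X\setminus Xf|$ using finiteness of $X\setminus Y$. However, your counting sketch contains a slip. You assert that $|X\setminus T_f|$ equals $n$ minus the number of $\ker(f)$-classes \emph{meeting} $X\setminus Y$; but if a class meets both $Y$ and $X\setminus Y$, its chosen representative lies in $Y$, so it contributes nothing to $T_f\cap(X\setminus Y)$. The correct count is $n$ minus the number of classes \emph{disjoint} from $Y$. A symmetric slip occurs on the image side: you compute $|(X\setminus Y)\setminus(X\setminus Y)f|$ as $n-|(X\setminus Y)f|$, but since $(X\setminus Y)f$ need not lie inside $X\setminus Y$, the correct subtraction is $n-|(X\setminus Y)f\cap(X\setminus Y)|$, which again equals $n$ minus the number of classes disjoint from $Y$ (any such class must map into $X\setminus Y$, since $f_{\upharpoonright_Y}$ already covers $Y$). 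The two errors happen to cancel, so your conclusion survives, but the intermediate claims as written are false whenever some class straddles $Y$ and $X\setminus Y$.

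The paper sidesteps this bookkeeping with a cleaner observation: $f_{|_{T_f}}\colon T_f\to Xf$ is a bijection which restricts to the bijection $f_{\upharpoonright_Y}\colon Y\to Y$, hence also to a bijection $T_f\setminus Y\to Xf\setminus Y$, giving $|T_f\setminus Y|=|Xf\setminus Y|$ immediately. Since both $T_f\setminus Y$ and $Xf\setminus Y$ are subsets of the finite set $X\setminus Y$, one gets $|X\setminus T_f|=|X\setminus Y|-|T_f\setminus Y|=|X\setminus Y|-|Xf\setminus Y|=|X\setminus Xf|$ without ever counting classes. You might replace your finite-set sublemma with this one-line bijection argument.
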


\begin{proof}
	Suppose that $\overline{S}(X,Y)$ is unit-regular. On the contrary, assume that $X\setminus Y$ is infinite. Then, by Lemma \ref{ureg-l-FXY}, there exists $f\in \text{Fix}(X, Y)$ such that $f\notin \ureg(\overline{S}(X,Y))$. However, since $\text{Fix}(X, Y)\subseteq \overline{S}(X,Y)$, we have $f\in \overline{S}(X,Y)$ and so $f\in \ureg(\overline{S}(X,Y))$ by hypothesis. This is a contradiction of the fact that $f\notin \ureg(\overline{S}(X,Y))$. Hence $X\setminus Y$ is finite.

\vspace{0.1cm}
Conversely, suppose that $X\setminus Y$ is finite and let $f\in \overline{S}(X,Y)$. Clearly $Y \subseteq Xf$ and therefore  $X\setminus Xf, Xf\setminus Y\subseteq X\setminus Y$. Since $f_{\upharpoonright_Y} \in S(Y)$, it is clear that $Y$ is the only transversal of $\ker(f_{\upharpoonright_Y})$ and there exists a transversal $T_f$ of $\ker(f)$ such that $Y\subseteq T_f$. Then $X\setminus T_f, T_f\setminus Y \subseteq X\setminus Y$.  Since $f_{\upharpoonright_Y} \colon Y \to Y$ and $f_{|_{T_f}}\colon T_f \to Xf$ are bijections, it follows that $|T_f\setminus Y| = |Xf\setminus Y|$. Further, since $X\setminus Y$ is finite, the sets $X\setminus Xf$, $Xf\setminus Y$, $X\setminus T_f$, and $T_f\setminus Y$ are finite. Therefore
\[|(X\setminus Y)\setminus (T_f\setminus Y)| =|X\setminus Y|- |T_f\setminus Y|=
|X\setminus Y|-|Xf\setminus Y|= |(X\setminus Y)\setminus (Xf\setminus Y)|\]
and so
$|X\setminus T_f| = |(X\setminus Y)\setminus (T_f\setminus Y)| = |(X\setminus Y)\setminus (Xf\setminus Y)| = |X\setminus Xf|$.
This implies that $f\in \ureg(\overline{S}(X,Y))$ by Theorem \ref{ureg-element-SXY}. Since $f$ is arbitrary, we conclude that $\overline{S}(X,Y)$ is unit-regular.	
\end{proof}

\vspace{0.1cm}

Using Proposition \ref{reg-OXY} and Theorem \ref{ureg-SXY}, we obtain the following.

\begin{theorem}
The semigroup $\overline{\Omega}(X,Y)$ is unit-regular if and only if $X$ is finite.
\end{theorem}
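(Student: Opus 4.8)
The plan is to derive this as a straightforward consequence of the two previously established results: Proposition \ref{reg-OXY}, which says $\overline{\Omega}(X,Y)$ is regular iff $Y$ is finite, and Theorem \ref{ureg-SXY}, which says $\overline{S}(X,Y)$ is unit-regular iff $X\setminus Y$ is finite. The key observation is that unit-regularity implies regularity, and that by Theorem \ref{ureg-OXY} the unit-regular elements of $\overline{\Omega}(X,Y)$ coincide with those of $\overline{S}(X,Y)$, so $\overline{\Omega}(X,Y)$ being unit-regular forces $\overline{\Omega}(X,Y) = \ureg(\overline{\Omega}(X,Y)) = \ureg(\overline{S}(X,Y)) \subseteq \overline{S}(X,Y)$, hence $\overline{S}(X,Y) = \overline{\Omega}(X,Y)$.

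For the forward direction, I would suppose $\overline{\Omega}(X,Y)$ is unit-regular. Then in particular it is regular, so by Proposition \ref{reg-OXY} the set $Y$ is finite; consequently $\overline{S}(X,Y) = \overline{\Omega}(X,Y)$ (the equivalence noted just after Theorem \ref{regular-OXY}, via \cite[Proposition 1.1.3]{gan-maz09}). Hence $\overline{S}(X,Y)$ is unit-regular, so by Theorem \ref{ureg-SXY} the set $X\setminus Y$ is finite. Since $Y$ is finite and $X = Y \cup (X\setminus Y)$, it follows that $X$ is finite.

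For the converse, suppose $X$ is finite. Then both $Y$ and $X\setminus Y$ are finite. Finiteness of $Y$ gives $\overline{S}(X,Y) = \overline{\Omega}(X,Y)$, and finiteness of $X\setminus Y$ gives, by Theorem \ref{ureg-SXY}, that $\overline{S}(X,Y)$ is unit-regular; combining these, $\overline{\Omega}(X,Y)$ is unit-regular. There is essentially no obstacle here — the work has all been done in the preceding results — so the only care needed is to correctly invoke the equivalence $Y$ finite $\iff \overline{S}(X,Y)=\overline{\Omega}(X,Y)$ and to note that "$Y$ finite and $X\setminus Y$ finite" is the same as "$X$ finite". I would present it as a short four- or five-line proof.

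\begin{proof}
Suppose first that $\overline{\Omega}(X,Y)$ is unit-regular. Then $\overline{\Omega}(X,Y)$ is regular, so $Y$ is finite by Proposition \ref{reg-OXY}. Since $Y$ is finite, $\overline{S}(X,Y) = \overline{\Omega}(X,Y)$ (cf. \cite[Proposition 1.1.3]{gan-maz09}), and hence $\overline{S}(X,Y)$ is unit-regular. By Theorem \ref{ureg-SXY}, $X\setminus Y$ is finite. Therefore $X = Y \cup (X\setminus Y)$ is finite.

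Conversely, suppose that $X$ is finite. Then $Y$ and $X\setminus Y$ are both finite. Since $Y$ is finite, $\overline{S}(X,Y) = \overline{\Omega}(X,Y)$ (cf. \cite[Proposition 1.1.3]{gan-maz09}), and since $X\setminus Y$ is finite, $\overline{S}(X,Y)$ is unit-regular by Theorem \ref{ureg-SXY}. Hence $\overline{\Omega}(X,Y)$ is unit-regular.
\end{proof}
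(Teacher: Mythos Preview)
Your proof is correct and follows essentially the same approach as the paper's own proof: both directions use Proposition \ref{reg-OXY} to deduce that $Y$ is finite (hence $\overline{S}(X,Y)=\overline{\Omega}(X,Y)$), and then invoke Theorem \ref{ureg-SXY} to handle the finiteness of $X\setminus Y$. The logic and the sequence of steps match the paper almost exactly.
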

\begin{proof}
Suppose that $\overline{\Omega}(X,Y)$ is unit-regular. Then $\overline{\Omega}(X,Y)$ is regular and so $Y$ is finite by Proposition \ref{reg-OXY}. Therefore $\overline{\Omega}(X,Y)=\overline{S}(X,Y)$. Since 
$\overline{\Omega}(X,Y)$ is unit-regular and $\overline{\Omega}(X,Y)=\overline{S}(X,Y)$, it follows that $X\setminus Y$ is finite by Theorem \ref{ureg-SXY}. Moreover, since $Y$ is finite, we conclude that $X$ is finite. 
	
\vspace{0.1cm}
Conversely, suppose that $X$ is finite. Then $X\setminus Y$ is finite and so $\overline{S}(X,Y)$ is unit-regular by Theorem \ref{ureg-SXY}. Again, by hypothesis, the set $Y$ is finite and so $\overline{\Omega}(X,Y)=\overline{S}(X,Y)$. Hence $\overline{\Omega}(X,Y)$ is unit-regular.
\end{proof}

\vspace{0.1cm}
Now, we recall the notion of pre-inverses introduced by Hickey \cite[p. 372]{hick83} in 1983.
\vspace{0.1cm}
\begin{definition} \cite[p. 372]{hick83}
Let $S$ be a semigroup and $a\in S$. An element $b\in S$ is said to be a \emph{pre-inverse} of $a$ if $aba = a$. 
\end{definition}

We denote by $\text{Pre}(a)$ the set of all pre-inverses of $a$ of a semigroup $S$.

\vspace{0.1cm}
\begin{lemma}\label{lem1-FXY}
	Let $f\in \overline{T}(X,Y)$. 
	\begin{enumerate}
		\item[\rm(i)] If $f\in \overline{S}(X,Y)$, then $\text{Pre}(f) \subseteq \overline{S}(X,Y)$.
		\item[\rm(ii)] If $f\in \mbox{Fix}(X,Y)$, then $\text{Pre}(f) \subseteq \mbox{Fix}(X,Y)$.
	\end{enumerate}
\end{lemma}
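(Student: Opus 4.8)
The plan is to prove both parts directly by chasing the defining equation $fgf = f$ for a pre-inverse $g$, using the restriction-to-$Y$ map together with Lemma \ref{trans-image} and the classical fact that in $T(Y)$ a pre-inverse of an injective map is again injective (the same argument already used in the proof of Theorem \ref{regular-OXY}).

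For part (i): suppose $f \in \overline{S}(X,Y)$, so $f_{\upharpoonright_Y} \in S(Y)$, and let $g \in \text{Pre}(f)$, i.e. $g \in \overline{T}(X,Y)$ with $fgf = f$. Restricting to $Y$ we get $f_{\upharpoonright_Y}\, g_{\upharpoonright_Y}\, f_{\upharpoonright_Y} = f_{\upharpoonright_Y}$ in $T(Y)$ (here I use that $Yf \subseteq Y$ and $Yg \subseteq Y$ so the restrictions compose correctly). Since $f_{\upharpoonright_Y} \in S(Y)$ is a unit of $T(Y)$, cancelling on both sides forces $g_{\upharpoonright_Y} = (f_{\upharpoonright_Y})^{-1} \in S(Y)$; hence $Yg = Y$ and $g \in \overline{S}(X,Y)$. (Alternatively, without invoking cancellation: by \cite[Lemma 3.1]{shubh-s1-21}, $Y(f_{\upharpoonright_Y} g_{\upharpoonright_Y})$ is a transversal of $\ker(f_{\upharpoonright_Y})$; since $f_{\upharpoonright_Y}$ is injective this transversal is all of $Y$, forcing $g_{\upharpoonright_Y}$ to be surjective, hence bijective, on $Y$.) Either way we conclude $\text{Pre}(f) \subseteq \overline{S}(X,Y)$.

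For part (ii): suppose $f \in \text{Fix}(X,Y)$, so $f_{\upharpoonright_Y} = I_Y$, and let $g \in \text{Pre}(f) \subseteq \overline{T}(X,Y)$. By part (i) (applicable since $\text{Fix}(X,Y) \subseteq \overline{S}(X,Y)$) we already know $g \in \overline{S}(X,Y)$, so $Yg = Y$ and the restriction $g_{\upharpoonright_Y}$ is a well-defined permutation of $Y$. Restricting $fgf = f$ to $Y$ and using $f_{\upharpoonright_Y} = I_Y$ gives $I_Y \, g_{\upharpoonright_Y}\, I_Y = I_Y$, that is $g_{\upharpoonright_Y} = I_Y$, which is precisely the statement $g \in \text{Fix}(X,Y)$. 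Hence $\text{Pre}(f) \subseteq \text{Fix}(X,Y)$.

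The only mildly delicate point is making sure the restrictions behave correctly under composition: for $h_1, h_2 \in \overline{T}(X,Y)$ one has $(h_1 h_2)_{\upharpoonright_Y} = (h_1)_{\upharpoonright_Y} (h_2)_{\upharpoonright_Y}$ because $Yh_1 \subseteq Y$, so evaluating $h_1 h_2$ on an element of $Y$ never leaves $Y$; this is routine and is the same bookkeeping used in the proof of Theorem \ref{regular-OXY}. Beyond that, the argument is short: everything reduces to the observation that on $Y$ the map $f$ is a permutation (resp. the identity), and a pre-inverse of such a map in $T(Y)$ is pinned down accordingly.
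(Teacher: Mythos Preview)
Your proof is correct and follows essentially the same approach as the paper: restrict the equation $fgf=f$ to $Y$ and cancel the permutation (resp.\ identity) $f_{\upharpoonright_Y}$ on both sides to force $g_{\upharpoonright_Y}\in S(Y)$ (resp.\ $g_{\upharpoonright_Y}=I_Y$). The only difference is that in part~(ii) you first invoke part~(i) to secure $g\in\overline{S}(X,Y)$ before restricting, whereas the paper proceeds directly; this extra step is harmless but unnecessary, since $g\in\overline{T}(X,Y)$ already guarantees $Yg\subseteq Y$ and hence that $g_{\upharpoonright_Y}$ is a well-defined self-map of $Y$.
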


\begin{proof}\
	
	\begin{enumerate}
		\item[\rm(i)] Let $g\in \text{Pre}(f)$. This means $fgf = f$. It follows that $f_{\upharpoonright_Y}g_{\upharpoonright_Y}f_{\upharpoonright_Y}=f_{\upharpoonright_Y}$. Since $f\in \overline{S}(X,Y)$, we have $f_{\upharpoonright_Y} \in S(Y)$. Therefore $f_{\upharpoonright_Y}g_{\upharpoonright_Y} = I_Y$ and $g_{\upharpoonright_Y}f_{\upharpoonright_Y} = I_Y$. Hence $g\in \overline{S}(X,Y)$ as required. 
		
		\vspace{0.1cm}
		\item[\rm(ii)] Let $g\in \text{Pre}(f)$. This means $fgf = f$. It follows that $f_{\upharpoonright_Y}g_{\upharpoonright_Y}f_{\upharpoonright_Y}=f_{\upharpoonright_Y}$. Since $f\in \mbox{Fix}(X,Y)$, we have $f_{\upharpoonright_Y} = I_Y$. Therefore $g_{\upharpoonright_Y} = I_Y$ and hence $g\in \mbox{Fix}(X,Y)$ as required.
		\end{enumerate}
\end{proof}


Next, we give a new proof of the next result which was first appeared in \cite[p. 83]{hony-qm13}.

\vspace{0.1cm}
\begin{proposition}
The semigroup \mbox{Fix}$(X,Y)$ is regular.
\end{proposition}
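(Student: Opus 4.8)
The plan is to exploit the chain of inclusions $\text{Fix}(X,Y) \subseteq \overline{S}(X,Y) \subseteq \overline{T}(X,Y)$ together with Lemma \ref{lem1-FXY}(ii), which says that every pre-inverse (taken in $\overline{T}(X,Y)$) of an element of $\text{Fix}(X,Y)$ again lies in $\text{Fix}(X,Y)$. Thus, to prove that an arbitrary $f\in\text{Fix}(X,Y)$ is regular in $\text{Fix}(X,Y)$, it suffices to exhibit \emph{one} element $g\in\overline{T}(X,Y)$ with $fgf=f$: Lemma \ref{lem1-FXY}(ii) then automatically upgrades such a $g$ to an element of $\text{Fix}(X,Y)$, and $f$ is regular in $\text{Fix}(X,Y)$. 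Since $f$ is arbitrary, $\text{Fix}(X,Y)$ is regular.

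For the existence of such a $g$, I would quote that $\overline{S}(X,Y)$ is regular (\cite[Theorem 2.2]{laysi16}; equivalently, $\reg(\overline{\Omega}(X,Y))=\overline{S}(X,Y)$ by Theorem \ref{regular-OXY}). As $f\in\text{Fix}(X,Y)\subseteq\overline{S}(X,Y)$, regularity of $\overline{S}(X,Y)$ gives $g\in\overline{S}(X,Y)$ with $fgf=f$; then $g\in\overline{S}(X,Y)\subseteq\overline{T}(X,Y)$, so $g\in\text{Pre}(f)$, and Lemma \ref{lem1-FXY}(ii) forces $g\in\text{Fix}(X,Y)$, which finishes the proof. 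Only the already-cited regularity of $\overline{S}(X,Y)$ together with results of the present paper is needed here.

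Alternatively, one can give a self-contained construction: because $f$ fixes $Y$ pointwise, $Y\subseteq Xf$, and each $y\in Y$ may be chosen as the representative of its $\ker(f)$-class, so there is a transversal $T_f$ of $\ker(f)$ with $Y\subseteq T_f$. Then $f_{|_{T_f}}\colon T_f\to Xf$ is a bijection; define $g$ on $Xf$ by sending $z\in Xf$ to the unique element of $T_f$ mapped to $z$ by $f$, extend $g$ arbitrarily on $X\setminus Xf$, and verify directly that $fgf=f$ and that $g_{\upharpoonright_Y}=I_Y$ (the latter because every $y\in Y$ satisfies $y\in T_f$ and $yf=y$, whence $yg=y$). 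Either way I do not anticipate a genuine obstacle: the only delicate point is ensuring that the pre-inverse fixes $Y$, and this is exactly what Lemma \ref{lem1-FXY}(ii) supplies (or, in the constructive version, the choice $Y\subseteq T_f$).
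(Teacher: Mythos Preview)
Your primary argument is exactly the paper's own proof: take $f\in\text{Fix}(X,Y)\subseteq\overline{S}(X,Y)$, invoke the regularity of $\overline{S}(X,Y)$ from \cite[Theorem~2.2]{laysi16} to obtain $g\in\overline{S}(X,Y)$ with $fgf=f$, and apply Lemma~\ref{lem1-FXY}(ii) to conclude $g\in\text{Fix}(X,Y)$. Your alternative self-contained construction (choosing a transversal $T_f$ with $Y\subseteq T_f$) is also correct and is a pleasant addition, but the paper does not include it.
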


\begin{proof}
Let $f\in \mbox{Fix}(X,Y)$. Then $f\in \overline{S}(X,Y)$. It follows from \cite[Theorem 2.2]{laysi16} that $fgf = f$ for some $g\in \overline{S}(X,Y)$. Then, by Lemma \ref{lem1-FXY}, we have $g\in \mbox{Fix}(X,Y)$ and so $f\in \reg(\mbox{Fix}(X,Y))$. Hence $\mbox{Fix}(X,Y)$ is regular. 
\end{proof}

\vspace{0.1cm}

The following result was first proved by Chaiya \cite[Theorem 5.2]{chaiya-t17} in $2017$.

\begin{proposition}
The semigroup $\text{Fix}(X,Y)$ is unit-regular if and only if $X\setminus Y$ is finite.
\end{proposition}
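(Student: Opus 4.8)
The plan is to prove the statement by reducing it to results already established for $\overline{S}(X,Y)$, exploiting the two-sided containments $\text{Fix}(X,Y)\subseteq \overline{S}(X,Y)$ and, crucially, the fact that pre-inverses of elements of $\text{Fix}(X,Y)$ stay inside $\text{Fix}(X,Y)$ (Lemma \ref{lem1-FXY}(ii)). The subtlety is that unit-regularity is about pre-inverses that are \emph{units} of the ambient semigroup, and the unit group of $\text{Fix}(X,Y)$ is not the same as that of $\overline{S}(X,Y)$; so a direct ``$\text{Fix}(X,Y)$ is an ideal/absorbing'' argument of the kind used for regularity will not immediately work, and this mismatch of unit groups is where I expect the main obstacle to lie.

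For the forward direction, I would argue contrapositively: if $X\setminus Y$ is infinite, Lemma \ref{ureg-l-FXY} already produces an $f\in \text{Fix}(X,Y)$ with $f\notin \ureg(\overline{S}(X,Y))$. Since $\text{Fix}(X,Y)\subseteq \overline{S}(X,Y)$ and any unit of $\text{Fix}(X,Y)$ is in particular a unit of $\overline{S}(X,Y)$ (indeed $U(\text{Fix}(X,Y))\subseteq U(\overline{S}(X,Y))$ because $I_X\in \text{Fix}(X,Y)$ is the identity of both and invertibility in a subsemigroup containing the ambient identity implies invertibility above), a unit-regular factorization $f=fuf$ with $u\in U(\text{Fix}(X,Y))$ would give $f\in \ureg(\overline{S}(X,Y))$, a contradiction. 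Hence $X\setminus Y$ must be finite.

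For the converse, suppose $X\setminus Y$ is finite and take $f\in \text{Fix}(X,Y)$. Then $f\in \overline{S}(X,Y)$, which is unit-regular by Theorem \ref{ureg-SXY}, so there is $u\in U(\overline{S}(X,Y))$ with $fuf=f$. By Lemma \ref{lem1-FXY}(ii), since $fuf=f$ means $u\in \text{Pre}(f)$ and $f\in \text{Fix}(X,Y)$, we get $u\in \text{Fix}(X,Y)$. It remains to check that $u$ is a \emph{unit of} $\text{Fix}(X,Y)$, not merely an element of it: from $u\in U(\overline{S}(X,Y))$ we have $u_{\upharpoonright_Y}\in S(Y)$ and $u$ is a bijection of $X$; combined with $u\in \text{Fix}(X,Y)$ this forces $u_{\upharpoonright_Y}=I_Y$, and since $u$ is bijective on $X$ and fixes $Y$ pointwise, its inverse $u^{-1}$ is also a bijection of $X$ fixing $Y$ pointwise, hence $u^{-1}\in \text{Fix}(X,Y)$; therefore $u\in U(\text{Fix}(X,Y))$. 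Thus $f\in \ureg(\text{Fix}(X,Y))$, and since $f$ was arbitrary, $\text{Fix}(X,Y)$ is unit-regular.

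In short, the two ingredients are: (1) $U(\text{Fix}(X,Y))$ sits inside $U(\overline{S}(X,Y))$, and (2) conversely any unit of $\overline{S}(X,Y)$ that happens to lie in $\text{Fix}(X,Y)$ is automatically a unit of $\text{Fix}(X,Y)$ — the latter because a bijection of $X$ fixing $Y$ pointwise has an inverse with the same property. Once these are in place, the statement is immediate from Theorem \ref{ureg-SXY} and Lemmas \ref{ureg-l-FXY} and \ref{lem1-FXY}. The only place demanding care is verifying these two unit-group relations cleanly; the rest is bookkeeping already done in the cited results.
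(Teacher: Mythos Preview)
Your proof is correct and follows essentially the same route as the paper's: both directions use Lemma~\ref{ureg-l-FXY}, Theorem~\ref{ureg-SXY}, and Lemma~\ref{lem1-FXY}(ii) in exactly the same way. The only difference is that you spell out explicitly why a unit of $\overline{S}(X,Y)$ lying in $\text{Fix}(X,Y)$ is automatically a unit of $\text{Fix}(X,Y)$, whereas the paper simply asserts ``by Lemma~\ref{lem1-FXY}, we simply have $g\in U(\text{Fix}(X,Y))$'' without isolating that step.
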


\begin{proof}
Suppose that $\mbox{Fix}(X,Y)$ is unit-regular. On the contrary, assume that $X\setminus Y$ is infinite. Then, by Lemma \ref{ureg-l-FXY}, there exists $f\in \text{Fix}(X, Y)$ such that $f\notin \ureg(\overline{S}(X,Y))$.  Since $\mbox{Fix}(X,Y) \subseteq \overline{S}(X,Y)$, it follows that $f\notin \ureg(\mbox{Fix}(X,Y))$. However, since $f\in \text{Fix}(X, Y)$, we get $f\in \ureg(\mbox{Fix}(X,Y))$ by hypothesis. This is a contradiction. Hence $X\setminus Y$ is finite.

\vspace{0.1cm}
Conversely, suppose that $X\setminus Y$ is finite and let $f\in \mbox{Fix}(X,Y)$. Then $f\in \overline{S}(X,Y)$. Since $X\setminus Y$ is finite, we get $fgf = f$ for some $g\in U(\overline{S}(X,Y))$ by Theorem \ref{ureg-SXY}. Now, by Lemma \ref{lem1-FXY}, we simply have $g\in U(\mbox{Fix}(X,Y))$. Hence $f\in \ureg(\mbox{Fix}(X,Y))$ as required. 
\end{proof}

\section{Green's Relations on $\overline{\Omega}(X,Y)$}

In this section, we give a characterization of Green's relations on $\overline{\Omega}(X,Y)$ and prove that $\mathcal{D} = \mathcal{J}$ on $\overline{\Omega}(X,Y)$ if and only if $Y$ is finite. We refer the reader to \cite[Chapter 2]{howie95} for definitions and notation of Green's relations.

\vspace{0.1cm}

In order to characterize $\mathcal{L}$-relation on $\overline{\Omega}(X,Y)$, we need the following lemma. 

\begin{lemma}\label{green-l-surj}
Let $f,g\in \overline{\Omega}(X,Y)$. Then  $f=hg$ for some $h\in \overline{\Omega}(X,Y)$
 if and only if 
\begin{enumerate}
\item[\rm(i)] $Xf\subseteq Xg$;
\item[\rm(ii)] $|y(f_{\upharpoonright_Y})^{-1}|\geq |y(g_{\upharpoonright_Y})^{-1}|$ for all $y\in Y$. 
\end{enumerate}
\end{lemma}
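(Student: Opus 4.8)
The plan is to prove both directions directly, working with the concrete structure of maps. The key point is that an element of $\overline{\Omega}(X,Y)$ fixes $Y$ setwise, so its restriction to $Y$ lies in $\Omega(Y)$, and any candidate factor $h$ must also respect $Y$.

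First I would handle the forward direction. Suppose $f = hg$ with $h \in \overline{\Omega}(X,Y)$. Then $Xf = X(hg) = (Xh)g \subseteq Xg$, giving (i). For (ii), restrict to $Y$: since $h, g \in \overline{\Omega}(X,Y)$ we get $Yf = Y(hg) = (Yh)g = Yg$ (both equal $Y$), and more precisely $f_{\upharpoonright_Y} = h_{\upharpoonright_Y}\, g_{\upharpoonright_Y}$ as maps $Y \to Y$. Now for any $y \in Y$, the preimage $y(f_{\upharpoonright_Y})^{-1} = \bigl(y(g_{\upharpoonright_Y})^{-1}\bigr)(h_{\upharpoonright_Y})^{-1}$. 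Since $h_{\upharpoonright_Y}\colon Y\to Y$ is surjective, every element of $y(g_{\upharpoonright_Y})^{-1}$ has a nonempty $h_{\upharpoonright_Y}$-preimage inside $Y$, and these preimages are pairwise disjoint for distinct target elements; hence $|y(f_{\upharpoonright_Y})^{-1}| \ge |y(g_{\upharpoonright_Y})^{-1}|$, which is (ii). (If one wants to be careful about cardinal arithmetic with infinite sets, one notes that a surjection restricted over a subset still surjects, so the inequality of cardinalities holds.)

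For the converse, assume (i) and (ii); I would construct $h \in \overline{\Omega}(X,Y)$ with $f = hg$. Condition (ii) gives, for each $y \in Y$, an injection (in fact we want the images to partition $Y$ correctly) from $y(g_{\upharpoonright_Y})^{-1}$ into $y(f_{\upharpoonright_Y})^{-1}$; since both $f_{\upharpoonright_Y}, g_{\upharpoonright_Y} \in \Omega(Y)$, the family $\{y(f_{\upharpoonright_Y})^{-1} : y \in Y\}$ and $\{y(g_{\upharpoonright_Y})^{-1}: y\in Y\}$ are both partitions of $Y$, so one can choose a map $h_Y \colon Y \to Y$ sending $y(g_{\upharpoonright_Y})^{-1}$ onto $y(f_{\upharpoonright_Y})^{-1}$ (surjectively, using (ii) in the direction that the source block injects — here one must be slightly careful: we actually need $h_Y$ surjective onto $Y$, which follows because every block $y(f_{\upharpoonright_Y})^{-1}$ is nonempty and is hit). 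The resulting $h_Y$ satisfies $h_Y\, g_{\upharpoonright_Y} = f_{\upharpoonright_Y}$ and $h_Y \in \Omega(Y)$. Off $Y$: for $x \in X \setminus Y$, condition (i) says $xf \in Xg$, so pick $x' \in X$ with $x'g = xf$ and set $xh = x'$. Define $h$ by $h_{\upharpoonright_Y} = h_Y$ on $Y$ and this choice on $X\setminus Y$. Then $Yh = Y$, so $h \in \overline{\Omega}(X,Y)$, and by construction $xhg = xf$ for all $x \in X$ (for $x \in Y$ by the restriction identity, for $x \notin Y$ by the choice of preimage), so $hg = f$.

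The main obstacle I anticipate is the bookkeeping in the converse when $Y$ is infinite: one must verify that the pointwise choices on the blocks $y(g_{\upharpoonright_Y})^{-1}$ assemble into a genuine function $Y \to Y$ that is surjective, using that these blocks partition $Y$ and that (ii) guarantees each target block $y(f_{\upharpoonright_Y})^{-1}$ is large enough to receive a block mapping onto it — strictly, I would route the source block $y(g_{\upharpoonright_Y})^{-1}$ onto $y(f_{\upharpoonright_Y})^{-1}$, which is possible exactly because $|y(g_{\upharpoonright_Y})^{-1}| \le$ is the wrong direction, so one instead sends each element of $y(g_{\upharpoonright_Y})^{-1}$ to a chosen element of $y(f_{\upharpoonright_Y})^{-1}$ and additionally arranges surjectivity onto $y(f_{\upharpoonright_Y})^{-1}$, which needs $|y(g_{\upharpoonright_Y})^{-1}| \ge |y(f_{\upharpoonright_Y})^{-1}|$ — wait, this is the reverse of (ii). I would resolve this by re-reading the intended direction: $h$ acts \emph{before} $g$, so $h$ collapses preimage-blocks of $g$ into preimage-blocks of $f$; the correct requirement is that the $g$-block be at least as large as the $f$-block it must cover, i.e. one needs to re-examine whether the lemma's inequality should be applied with roles of $f,g$ as the authors intend. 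Assuming the statement as given is correct, the surjectivity of $h_Y$ onto $Y$ comes for free from each $f_{\upharpoonright_Y}$-block being nonempty and the $g_{\upharpoonright_Y}$-blocks covering $Y$, and (ii) is exactly what lets each element in a target block be reached; I would present the construction carefully in that light, treating the finite and infinite cardinality cases uniformly via standard cardinal arithmetic for partitions.
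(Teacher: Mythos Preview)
Your forward direction is fine and essentially matches the paper's (the paper just cites \cite[Lemma~2]{hony11} and \cite[Theorem~2.1(1)]{koni-ac19} instead of writing it out). The gap is in the converse: you have the source and target of the block map $h_Y$ reversed, and the attempted self-correction in the last paragraph never actually straightens it out.

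Concretely: we want $f_{\upharpoonright_Y} = h_Y\, g_{\upharpoonright_Y}$, so for $x\in Y$ with $xf = y$ we need $(xh_Y)g = y$, i.e.\ $xh_Y \in y(g_{\upharpoonright_Y})^{-1}$. Thus $h_Y$ must carry the \emph{$f$-block} $y(f_{\upharpoonright_Y})^{-1}$ into the \emph{$g$-block} $y(g_{\upharpoonright_Y})^{-1}$, not the other way round as you first wrote. For $h_Y$ to be surjective onto $Y$, since the $g$-blocks partition $Y$, it suffices that each restriction $h_Y\colon y(f_{\upharpoonright_Y})^{-1}\to y(g_{\upharpoonright_Y})^{-1}$ be onto; such a surjection exists precisely when $|y(f_{\upharpoonright_Y})^{-1}|\ge |y(g_{\upharpoonright_Y})^{-1}|$, which is exactly (ii). So the inequality in the lemma is correct as stated; your sentence ``the correct requirement is that the $g$-block be at least as large as the $f$-block it must cover'' has the roles inverted, and questioning whether the authors stated the inequality the right way is unwarranted.

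Once this is clear the construction goes through cleanly: for each $y\in Y$ choose a surjection $\phi_y\colon y(f_{\upharpoonright_Y})^{-1}\to y(g_{\upharpoonright_Y})^{-1}$, patch these together to get $h_Y\in\Omega(Y)$ with $h_Y\, g_{\upharpoonright_Y}=f_{\upharpoonright_Y}$, and then extend $h$ to $X\setminus Y$ by the preimage-choice under~(i), exactly as you already wrote. The paper avoids this bookkeeping by quoting Konieczny's characterization of left divisibility in $\Omega(Y)$ \cite[Theorem~2.1(1)]{koni-ac19}, which packages condition~(ii) directly as ``there exists $\gamma\in\Omega(Y)$ with $f_{\upharpoonright_Y}=\gamma g_{\upharpoonright_Y}$''; your direct block-by-block approach is what underlies that result and is perfectly acceptable once the direction is fixed.
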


\begin{proof}
Suppose that $f=hg$ for some $h\in \overline{\Omega}(X,Y)$. 
\begin{enumerate}
\item[\rm(i)] Since $\overline{\Omega}(X,Y) \subseteq \overline{T}(X,Y)$, we have $f, g, h \in \overline{T}(X,Y)$ and so $Xf\subseteq Xg$ by \cite[Lemma 2]{hony11}. 

\item[\rm(ii)] Note that $f_{\upharpoonright_Y}, g_{\upharpoonright_Y}, h_{\upharpoonright_Y} \in \Omega(Y)$ and $f_{\upharpoonright_Y}=h_{\upharpoonright_Y}g_{\upharpoonright_Y}$. Then, by \cite[Theorem 2.1(1)]{koni-ac19}, we get $|y(f_{\upharpoonright_Y})^{-1}|\geq |y(g_{\upharpoonright_Y})^{-1}|$ for all $y\in Y$.
\end{enumerate}	
\vspace{0.1cm}
Conversely, suppose that the given conditions hold. First, we recall (ii). Then, by \cite[Theorem 2.1(1)]{koni-ac19}, there exists $\gamma \in \Omega (Y)$ such that $f_{\upharpoonright_Y}=\gamma g_{\upharpoonright_Y}$. Next, by (i), there exists $x'\in X$ such that $xf=x'g$ for all $x\in X\setminus Y$. Therefore, for each $x\in X\setminus Y$, we may choose such an element $x'\in X$. Now, we define $h\colon X \to X$ by
	\begin{align*}
		xh=
		\begin{cases}
			x\gamma  & \mbox{ if }x\in Y,\\
			x' & \mbox{ if }x\in X\setminus Y.
		\end{cases}
	\end{align*}
It is clear that $h\in \overline{\Omega}(X,Y)$. Finally, we show that $f = hg$. Let $x\in X$. Consider two possible cases.

\vspace{0.1cm}
\noindent\textbf{Case 1:} $x\in Y$. Then 
$x(hg)=(xh)g =(x\gamma)g=(x\gamma)g_{\upharpoonright_Y}=(x(\gamma g_{\upharpoonright_Y})=xf_{\upharpoonright_Y}=xf$.

\vspace{0.1cm}
\noindent\textbf{Case 2:} $x\in X\setminus Y$. Then $x(hg)= (xh)g = x'g=xf$. 

\vspace{0.1cm}
Hence, from both cases, we conclude that $f = hg$ as required.
\end{proof}

\begin{theorem}\label{l-green}
Let $f,g\in \overline{\Omega}(X,Y)$. Then $(f, g)\in\mathcal{L}$ in  $\overline{\Omega}(X,Y)$ if and only if  $Xf = Xg$ and  $|y(f_{\upharpoonright_Y})^{-1}|= |y(g_{\upharpoonright_Y})^{-1}|$ for all $y\in Y$.
\end{theorem}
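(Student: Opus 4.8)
The plan is to obtain this as an immediate symmetric consequence of Lemma~\ref{green-l-surj}. First I would record the structural observation that $I_X\in\overline{\Omega}(X,Y)$, so $\overline{\Omega}(X,Y)$ is a monoid and hence $\overline{\Omega}(X,Y)^1=\overline{\Omega}(X,Y)$. Consequently $(f,g)\in\mathcal{L}$ in $\overline{\Omega}(X,Y)$ if and only if there exist $h,h'\in\overline{\Omega}(X,Y)$ with $f=hg$ and $g=h'f$.

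For the forward direction, apply Lemma~\ref{green-l-surj} to the equation $f=hg$ to get $Xf\subseteq Xg$ and $|y(f_{\upharpoonright_Y})^{-1}|\geq|y(g_{\upharpoonright_Y})^{-1}|$ for all $y\in Y$, and apply it again to $g=h'f$ to get $Xg\subseteq Xf$ and $|y(g_{\upharpoonright_Y})^{-1}|\geq|y(f_{\upharpoonright_Y})^{-1}|$ for all $y\in Y$. The two inclusions give $Xf=Xg$, and for each $y\in Y$ the two reverse cardinal inequalities yield $|y(f_{\upharpoonright_Y})^{-1}|=|y(g_{\upharpoonright_Y})^{-1}|$ by the Cantor--Schr\"oder--Bernstein theorem (this is the only point where one must be slightly careful, since these fibres may be infinite).

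Conversely, if $Xf=Xg$ and $|y(f_{\upharpoonright_Y})^{-1}|=|y(g_{\upharpoonright_Y})^{-1}|$ for all $y\in Y$, then in particular $Xf\subseteq Xg$ and $|y(f_{\upharpoonright_Y})^{-1}|\geq|y(g_{\upharpoonright_Y})^{-1}|$ for all $y\in Y$, so Lemma~\ref{green-l-surj} gives $h\in\overline{\Omega}(X,Y)$ with $f=hg$; by the symmetric application (swapping the roles of $f$ and $g$) there is $h'\in\overline{\Omega}(X,Y)$ with $g=h'f$. Hence $(f,g)\in\mathcal{L}$. I do not expect any real obstacle here: the content is entirely carried by Lemma~\ref{green-l-surj}, and the proof is just its two-sided use together with the monoid remark and Schr\"oder--Bernstein.
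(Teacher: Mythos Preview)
Your proof is correct and is exactly the approach the paper takes: its entire proof reads ``It follows immediately from Lemma~\ref{green-l-surj}.'' You have simply unpacked that one line (including the monoid observation and the Cantor--Schr\"oder--Bernstein step), so there is nothing to add or compare.
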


\begin{proof}
It follows immediately from Lemma \ref{green-l-surj}.
\end{proof}


\vspace{0.1cm}
\begin{definition}\label{green-r-partition}
Let $\mathcal{P}, \mathcal{Q}$ be partitions of a set $X$, and let $\mathcal{A}$ and $\mathcal{B}$ be subcollections of blocks in $\mathcal{P}$ and $\mathcal{Q}$, respectively.
We say that $\mathcal{A}$ \emph{refines} $\mathcal{B}$, denoted by $\mathcal{A} \preceq \mathcal{B}$, if for every $A \in \mathcal{A}$, there exists $B \in \mathcal{B}$ such that $A \subseteq B$. Moreover, if $\mathcal{A} \preceq \mathcal{B}$ and $\mathcal{B} \preceq \mathcal{A}$, then we write $\mathcal{A} = \mathcal{B}$.
\end{definition}

For $f\in T(X)$ and $A \subseteq \codom(f)$ with $Xf \cap A\neq \emptyset$, we let
\[\pi_f(A) = \{xf^{-1}\colon x\in Xf \cap A\}.\]
We will write $\pi(f)$ instead of $\pi_f(X)$. Note that $\pi(f)$ is a partition of $X$ induced by $f$.

\vspace{0.1cm}

To characterize $\mathcal{R}$-relation on $\overline{\Omega}(X,Y)$, we require the following lemma.

\begin{lemma}\label{green-r-surj}
Let $f,g\in \overline{\Omega}(X,Y)$. Then $f=gh$ for some $h\in \overline{\Omega}(X,Y)$ if and only if 
\begin{enumerate}
	\item[\rm(i)] $\pi(g) \preceq \pi(f)$;
	\item[\rm(ii)] $\pi_g(Y) \preceq \pi_f(Y)$. 
\end{enumerate}
\end{lemma}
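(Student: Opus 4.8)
The statement to be proved is Lemma~\ref{green-r-surj}: for $f,g\in\overline{\Omega}(X,Y)$, one has $f=gh$ for some $h\in\overline{\Omega}(X,Y)$ if and only if $\pi(g)\preceq\pi(f)$ and $\pi_g(Y)\preceq\pi_f(Y)$. The plan is to handle the two directions separately, using the structure of an element of $\overline{\Omega}(X,Y)$ as something that both permutes $Y$ setwise (so $g_{\upharpoonright_Y}\in\Omega(Y)$) and maps $X$ into $X$. For the forward direction, suppose $f=gh$. Each block of $\pi(f)$ is a set $xf^{-1}=x(gh)^{-1}$; since $g$ is applied first, $xf^{-1}$ is a union of $g$-classes, which is exactly the assertion $\pi(g)\preceq\pi(f)$. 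For condition (ii), I would observe that $f_{\upharpoonright_Y}=g_{\upharpoonright_Y}h_{\upharpoonright_Y}$ in $\Omega(Y)$ (this is where $Yg=Y$, hence $Yg\subseteq Y$, is used so that the restriction makes sense and the composition is internal to $Y$); then for $y\in Y$ the block $y(f_{\upharpoonright_Y})^{-1}$ is a union of blocks of the form $z(g_{\upharpoonright_Y})^{-1}$ with $z\in y(h_{\upharpoonright_Y})^{-1}$, which says precisely $\pi_g(Y)\preceq\pi_f(Y)$. One should double-check that $\pi_f(Y)$ and $\pi_g(Y)$ are genuine partitions of $Y$ (not just of subsets): since $Yf=Y$ and $Yg=Y$, every $y\in Y$ lies in $Yf\cap Y$, so $\pi_f(Y)=\pi(f_{\upharpoonright_Y})$ and similarly for $g$.

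For the converse, assume (i) and (ii) hold, and construct $h$. Because $\pi(g)\preceq\pi(f)$, for each block $C=xf^{-1}$ of $\pi(f)$ and each $g$-block $D\subseteq C$ we know all points of $D$ have the same $g$-image; pick a representative to define where $h$ sends $Xg$. Concretely: for each $u\in Xg$, say $u=xg$ for $x\in X$; we want $x(gh)=xf$, i.e. $uh=xf$, and this is well-defined precisely because $\pi(g)\preceq\pi(f)$ guarantees $xf$ depends only on $u$ and not on the chosen preimage $x$. This defines $h$ on $Xg$; extend $h$ arbitrarily on $X\setminus Xg$, except that we must be careful to land inside the right structure. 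The key point requiring care is $Yh=Y$: we need $h\in\overline{\Omega}(X,Y)$, not merely $h\in T(X)$. Since $Yg=Y$, we have $Y\subseteq Xg$, so $h$ is already determined on $Y$ by the above recipe, and for $y\in Y$ we get $yh=$ (the $f$-image of any $g_{\upharpoonright_Y}$-preimage of $y$) $\in Yf=Y$; thus $Yh\subseteq Y$. To upgrade this to $Yh=Y$, I would use condition (ii): $\pi_g(Y)\preceq\pi_f(Y)$ tells us that the map on $Y$ induced by "$g$ then $h$" realizes every $f_{\upharpoonright_Y}$-block, hence $h_{\upharpoonright_Y}$ is surjective onto $Y$ — more carefully, since $g_{\upharpoonright_Y},f_{\upharpoonright_Y}\in\Omega(Y)$ and the blocks of $\pi_f(Y)$ are unions of blocks of $\pi_g(Y)$, the quotient maps give a well-defined surjection $Y/\!\ker g_{\upharpoonright_Y}\to Y/\!\ker f_{\upharpoonright_Y}$, and composing with the bijections induced by $f_{\upharpoonright_Y}$ and $g_{\upharpoonright_Y}$ shows $h_{\upharpoonright_Y}$ maps $Y$ onto $Y$. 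Then extend $h$ on $X\setminus Xg$ by sending everything to some fixed element of $Y$ (or of $Xf$), which keeps $h\in T(X)$ and does not disturb $Yh=Y$. Finally verify $f=gh$ pointwise: for $x\in X$, $x(gh)=(xg)h=xf$ by construction.

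The main obstacle is the converse direction, specifically ensuring the constructed $h$ actually lies in $\overline{\Omega}(X,Y)$ rather than just in $\overline{T}(X,Y)$ or $T(X)$ — that is, proving the surjectivity $Yh=Y$ from condition (ii) rather than just the inclusion $Yh\subseteq Y$. This is exactly the refinement (as opposed to "is a partition of $Y$") content of hypothesis (ii), and it should be extractable from Konieczny's characterization \cite[Theorem 2.1]{koni-ac19} of the $\mathcal{R}$-preorder on $\Omega(Y)$ applied to the pair $f_{\upharpoonright_Y},g_{\upharpoonright_Y}\in\Omega(Y)$: indeed (ii) is precisely the condition under which $f_{\upharpoonright_Y}=g_{\upharpoonright_Y}\gamma$ for some $\gamma\in\Omega(Y)$, and one takes $h_{\upharpoonright_Y}=\gamma$. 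So the cleanest writeup mirrors the proof of Lemma~\ref{green-l-surj}: invoke \cite[Theorem 2.1]{koni-ac19} to get $\gamma\in\Omega(Y)$ with $f_{\upharpoonright_Y}=g_{\upharpoonright_Y}\gamma$, use $\pi(g)\preceq\pi(f)$ to define $h$ on $Xg$ consistently (a standard transformation-semigroup argument, cf. \cite[Lemma 2]{hony11}), set $h$ to agree with $\gamma$ on $Y$, extend by a constant in $Y$ off $Xg$, and check membership and the equation $f=gh$ by the two-case computation on $Y$ and $X\setminus Y$.
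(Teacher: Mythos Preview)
Your argument is essentially correct, but it takes a more laborious route than the paper and contains a minor imprecision. The paper dispatches both directions by citing \cite[Lemma~3]{hony11} for $\overline{T}(X,Y)$: the forward direction is immediate since $f,g,h\in\overline{T}(X,Y)$, and for the converse that lemma already produces $h\in\overline{T}(X,Y)$ with $f=gh$; the only remaining task is to upgrade $h$ to $\overline{\Omega}(X,Y)$, which the paper does in one line by observing that $f_{\upharpoonright_Y}=g_{\upharpoonright_Y}h_{\upharpoonright_Y}$ in $T(Y)$ and that the second factor of a surjective composite is surjective. Your quotient-map argument and the appeal to Konieczny's $\mathcal{R}$-preorder both work, but they are unnecessary once you notice this elementary fact; you even set up exactly the right equation $f_{\upharpoonright_Y}=g_{\upharpoonright_Y}h_{\upharpoonright_Y}$ but then do not draw the immediate conclusion.

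One imprecision worth flagging: your claim that $\pi_f(Y)=\pi(f_{\upharpoonright_Y})$ and that $\pi_f(Y)$ is a partition of $Y$ is not literally true. By definition $\pi_f(Y)=\{yf^{-1}:y\in Y\}$ with preimages taken in $X$, so its blocks may contain points of $X\setminus Y$; it partitions $Yf^{-1}\supseteq Y$, not $Y$ itself. This does not break your argument (the refinement relation $\pi_g(Y)\preceq\pi_f(Y)$ is still between subcollections of $\pi(g)$ and $\pi(f)$, and intersecting with $Y$ recovers the corresponding statement for $f_{\upharpoonright_Y},g_{\upharpoonright_Y}$), but you should not identify the two objects. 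Also note that your proposal to ``set $h$ to agree with $\gamma$ on $Y$'' is redundant: since $Y\subseteq Xg$, your general recipe already determines $h$ on $Y$, and an arbitrary $\gamma$ from Konieczny's theorem need not coincide with it.
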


\begin{proof}
Suppose that $f=gh$ for some $h\in \overline{\Omega}(X,Y)$. Clearly $f, g, h \in \overline{T}(X,Y)$. Then, by \cite[Lemma 3]{hony11}, the statements (i) and (ii) hold. 

\vspace{0.1cm}
Conversely, suppose that the given conditions hold. Clearly $f,g\in \overline{T}(X,Y)$. Then, by \cite[Lemma 3]{hony11}, there exists $h\in \overline{T}(X,Y)$ such that $f=gh$. Now, we show that $h\in \overline{\Omega}(X,Y)$. Note that $f_{\upharpoonright_Y}, g_{\upharpoonright_Y} \in \Omega(Y)$ and $f_{\upharpoonright_Y}=g_{\upharpoonright_Y}h_{\upharpoonright_Y}$. Therefore $g_{\upharpoonright_Y}h_{\upharpoonright_Y}\in \Omega(Y)$ and so $h_{\upharpoonright_Y}\in \Omega(Y)$ (cf. \cite[Proposition 2.4.6]{ramji17}). Hence  $h\in \overline{\Omega}(X,Y)$ with $f=gh$ as required.
\end{proof}

\begin{theorem}\label{r-green}
Let $f,g\in \overline{\Omega}(X,Y)$. Then $(f, g)\in \mathcal{R}$ 
in $\overline{\Omega}(X,Y)$ if and only if $\pi(f) = \pi(g)$ and $\pi_f(Y) = \pi_g(Y)$.
\end{theorem}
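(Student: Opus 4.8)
The plan is to deduce Theorem~\ref{r-green} directly from Lemma~\ref{green-r-surj} by unwinding the definition of Green's relation $\mathcal{R}$. Recall that $(f,g)\in\mathcal{R}$ in $\overline{\Omega}(X,Y)$ means that $f$ and $g$ generate the same principal right ideal, i.e.\ $f S^1 = g S^1$ where $S = \overline{\Omega}(X,Y)$. Since both $f$ and $g$ lie in $S$, this is equivalent to saying that $f = g$ or $f = gh$ for some $h\in S$, and simultaneously $g = f$ or $g = fh'$ for some $h'\in S$. So the first step is to observe that $(f,g)\in\mathcal{R}$ if and only if there exist $h,h'\in\overline{\Omega}(X,Y)$ with $f = gh$ and $g = fh'$ (the case $f=g$ being subsumed by taking $h=h'=I_X$, which lies in $\overline{\Omega}(X,Y)$).

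Next I would apply Lemma~\ref{green-r-surj} to each of the two factorizations. From $f = gh$ with $h\in\overline{\Omega}(X,Y)$ we get $\pi(g)\preceq\pi(f)$ and $\pi_g(Y)\preceq\pi_f(Y)$; from $g = fh'$ with $h'\in\overline{\Omega}(X,Y)$ we get $\pi(f)\preceq\pi(g)$ and $\pi_f(Y)\preceq\pi_g(Y)$. Combining these via the antisymmetry built into Definition~\ref{green-r-partition} (namely, $\mathcal{A}\preceq\mathcal{B}$ and $\mathcal{B}\preceq\mathcal{A}$ means $\mathcal{A}=\mathcal{B}$) yields $\pi(f)=\pi(g)$ and $\pi_f(Y)=\pi_g(Y)$, which is exactly the forward direction. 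Conversely, if $\pi(f)=\pi(g)$ and $\pi_f(Y)=\pi_g(Y)$, then in particular $\pi(g)\preceq\pi(f)$ and $\pi_g(Y)\preceq\pi_f(Y)$, so Lemma~\ref{green-r-surj} produces $h\in\overline{\Omega}(X,Y)$ with $f=gh$; symmetrically, $\pi(f)\preceq\pi(g)$ and $\pi_f(Y)\preceq\pi_g(Y)$ give $h'\in\overline{\Omega}(X,Y)$ with $g=fh'$. Hence $(f,g)\in\mathcal{R}$.

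I expect no real obstacle here: the proof is a formal two-sided application of the preceding lemma, entirely parallel to Theorem~\ref{l-green}'s one-line deduction from Lemma~\ref{green-l-surj}. The only point that deserves a word of care is the reduction of $\mathcal{R}$ to the existence of the factors $h,h'$ \emph{within} $\overline{\Omega}(X,Y)$ rather than within a formal monoid adjunction $S^1$; this is handled by noting $I_X\in\overline{\Omega}(X,Y)$ (indeed $Y I_X = Y$), so the identity is already present and $fS^1 = fS\cup\{f\} = fS$, making the monoid-vs-semigroup distinction vacuous. With that observation in place, the argument is immediate, and I would present it in just a few lines mirroring the structure above.
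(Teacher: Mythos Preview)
Your proposal is correct and is precisely the approach the paper takes: the paper's proof of Theorem~\ref{r-green} is the single sentence ``It follows immediately from Lemma~\ref{green-r-surj},'' and what you have written is exactly the unwinding of that sentence. Your remark that $I_X\in\overline{\Omega}(X,Y)$ makes the $S^1$-versus-$S$ issue vacuous is the only point worth stating explicitly, and you have handled it correctly.
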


\begin{proof}
	It follows immediately from Lemma \ref{green-r-surj}.
\end{proof}

By definition of $\mathcal{H}$-relation, we know that $\mathcal{H} = \mathcal{L}\cap \mathcal{R}$. As an immediate consequence of Theorems \ref{l-green} and \ref{r-green}, we obtain the following.
\begin{theorem}
Let $f,g\in \overline{\Omega}(X,Y)$. Then $(f, g)\in \mathcal{H}$ in $\overline{\Omega}(X,Y)$ if and only if $Xf=Xg$, $\pi(f)=\pi(g)$, $\pi_f(Y)=\pi_g(Y)$, and $|y(f_{\upharpoonright_Y})^{-1}|=|y(g_{\upharpoonright_Y})^{-1}|$ for all $y\in Y$.	  
\end{theorem}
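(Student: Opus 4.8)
The plan is to exploit the defining identity $\mathcal{H} = \mathcal{L} \cap \mathcal{R}$ and simply conjoin the two characterizations already established. First I would recall that, by the definition of Green's relations, for $f, g \in \overline{\Omega}(X,Y)$ one has $(f,g) \in \mathcal{H}$ in $\overline{\Omega}(X,Y)$ if and only if $(f,g) \in \mathcal{L}$ and $(f,g) \in \mathcal{R}$ in $\overline{\Omega}(X,Y)$ simultaneously.

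Next I would invoke Theorem \ref{l-green}, which gives $(f,g) \in \mathcal{L}$ exactly when $Xf = Xg$ and $|y(f_{\upharpoonright_Y})^{-1}| = |y(g_{\upharpoonright_Y})^{-1}|$ for all $y \in Y$, together with Theorem \ref{r-green}, which gives $(f,g) \in \mathcal{R}$ exactly when $\pi(f) = \pi(g)$ and $\pi_f(Y) = \pi_g(Y)$. Taking the conjunction of these two equivalences shows that $(f,g) \in \mathcal{H}$ in $\overline{\Omega}(X,Y)$ if and only if all four conditions $Xf = Xg$, $\pi(f) = \pi(g)$, $\pi_f(Y) = \pi_g(Y)$, and $|y(f_{\upharpoonright_Y})^{-1}| = |y(g_{\upharpoonright_Y})^{-1}|$ for all $y \in Y$ hold, which is precisely the assertion.

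There is no real obstacle to surmount here: the substance resides in Lemmas \ref{green-l-surj} and \ref{green-r-surj} and the resulting descriptions of $\mathcal{L}$ and $\mathcal{R}$, and the present statement is an \emph{immediate} corollary. The only point worth noting is that both characterizations are genuine two-sided equivalences phrased over the same data, with no hidden interaction between the $\mathcal{L}$-conditions and the $\mathcal{R}$-conditions, so their conjunction does indeed characterize membership in $\mathcal{L} \cap \mathcal{R} = \mathcal{H}$; hence the proof is a single line referring back to Theorems \ref{l-green} and \ref{r-green}.
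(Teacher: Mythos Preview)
Your proposal is correct and matches the paper's approach exactly: the paper likewise states the result as an immediate consequence of Theorems~\ref{l-green} and~\ref{r-green} via $\mathcal{H} = \mathcal{L}\cap\mathcal{R}$, without giving any further argument.
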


\begin{lemma}\label{green-betwen-restric}
	Let $f,g\in \overline{\Omega}(X,Y)$. 
	\begin{enumerate}
		\item[\rm(i)] If $(f, g)\in \mathcal{L}$ in $\overline{\Omega}(X,Y)$, then $(f_{\upharpoonright_Y}, g_{\upharpoonright_Y})\in \mathcal{L}$ in $\Omega(Y)$.
		
		\item[\rm(ii)] If $(f, g)\in \mathcal{R}$ in $\overline{\Omega}(X,Y)$, then $(f_{\upharpoonright_Y}, g_{\upharpoonright_Y})\in \mathcal{R}$ in $\Omega(Y)$.
		
		\item[\rm(iii)] If $(f, g)\in \mathcal{D}$ in $\overline{\Omega}(X,Y)$, then $(f_{\upharpoonright_Y}, g_{\upharpoonright_Y})\in \mathcal{D}$ in $\Omega(Y)$.
	\end{enumerate}
\end{lemma}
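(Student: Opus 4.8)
The plan is to deduce all three parts from a single observation: the restriction map $\Phi\colon\overline{\Omega}(X,Y)\to\Omega(Y)$, $f\mapsto f_{\upharpoonright_Y}$, is a homomorphism of monoids. Well-definedness ($f_{\upharpoonright_Y}\in\Omega(Y)$) was recorded in Section~2; it respects composition because $Yg=Y\subseteq Y$ for every $g\in\overline{\Omega}(X,Y)$, so that $(fg)_{\upharpoonright_Y}=f_{\upharpoonright_Y}g_{\upharpoonright_Y}$; and it sends the identity $I_X$ to $I_Y$. Since any monoid homomorphism carries $\mathcal{L}$-related elements to $\mathcal{L}$-related elements and $\mathcal{R}$-related elements to $\mathcal{R}$-related elements (if $a=xb$ and $b=ya$ in the domain, then $\Phi(a)=\Phi(x)\Phi(b)$ and $\Phi(b)=\Phi(y)\Phi(a)$ in the codomain, and likewise on the right), parts (i) and (ii) are immediate. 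Part (iii) then follows from $\mathcal{D}=\mathcal{L}\circ\mathcal{R}$: if $(f,g)\in\mathcal{D}$ in $\overline{\Omega}(X,Y)$, pick $w\in\overline{\Omega}(X,Y)$ with $(f,w)\in\mathcal{L}$ and $(w,g)\in\mathcal{R}$; by (i) and (ii), $(f_{\upharpoonright_Y},w_{\upharpoonright_Y})\in\mathcal{L}$ and $(w_{\upharpoonright_Y},g_{\upharpoonright_Y})\in\mathcal{R}$ in $\Omega(Y)$, hence $(f_{\upharpoonright_Y},g_{\upharpoonright_Y})\in\mathcal{D}$ in $\Omega(Y)$.

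Spelled out for (i): if $(f,g)\in\mathcal{L}$ in $\overline{\Omega}(X,Y)$ then either $f=g$, whence $f_{\upharpoonright_Y}=g_{\upharpoonright_Y}$, or there are $h,k\in\overline{\Omega}(X,Y)$ with $f=hg$ and $g=kf$; restricting to $Y$ gives $f_{\upharpoonright_Y}=h_{\upharpoonright_Y}g_{\upharpoonright_Y}$ and $g_{\upharpoonright_Y}=k_{\upharpoonright_Y}f_{\upharpoonright_Y}$ with $h_{\upharpoonright_Y},k_{\upharpoonright_Y}\in\Omega(Y)$, so $(f_{\upharpoonright_Y},g_{\upharpoonright_Y})\in\mathcal{L}$ in $\Omega(Y)$. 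Part (ii) is the same argument with left and right interchanged.

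Alternatively, one can read (i) and (ii) directly off the descriptions just obtained: by Theorem~\ref{l-green}, $(f,g)\in\mathcal{L}$ gives $|y(f_{\upharpoonright_Y})^{-1}|=|y(g_{\upharpoonright_Y})^{-1}|$ for all $y\in Y$, which is precisely the condition for $(f_{\upharpoonright_Y},g_{\upharpoonright_Y})\in\mathcal{L}$ in $\Omega(Y)$ by \cite[Theorem 2.1]{koni-ac19}; and by Theorem~\ref{r-green}, $(f,g)\in\mathcal{R}$ gives $\pi_f(Y)=\pi_g(Y)$, and since $Yf=Y=Yg$ the partition $\pi(f_{\upharpoonright_Y})$ of $Y$ is obtained by intersecting the blocks of $\pi_f(Y)$ with $Y$ (each such intersection being nonempty), so $\ker(f_{\upharpoonright_Y})=\ker(g_{\upharpoonright_Y})$, i.e.\ $(f_{\upharpoonright_Y},g_{\upharpoonright_Y})\in\mathcal{R}$ in $\Omega(Y)$. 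Either way there is essentially no obstacle; the only points needing a moment's care are that restriction genuinely commutes with composition (which uses $Yg\subseteq Y$) and that $\overline{\Omega}(X,Y)$ and $\Omega(Y)$ be regarded as monoids with $\Phi(I_X)=I_Y$, so that the $S^1$ appearing in Green's relations plays no role.
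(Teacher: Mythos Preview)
Your proposal is correct and, when spelled out, follows essentially the same route as the paper: restrict the factorizations $f=hg$, $g=h'f$ (and their right-sided duals) to $Y$ to obtain the corresponding relations in $\Omega(Y)$, and then use $\mathcal{D}=\mathcal{L}\circ\mathcal{R}$ for part (iii). Your framing via the monoid homomorphism $\Phi$ and the alternative argument through Theorems~\ref{l-green} and~\ref{r-green} are pleasant additions but not used in the paper's proof.
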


\begin{proof}\
	\begin{enumerate}
		\item[\rm(i)] If $(f, g)\in \mathcal{L}$ in $\overline{\Omega}(X,Y)$, then there exist $h, h' \in \overline{\Omega}(X,Y)$ such that $f = hg$ and $g = h'f$. Clearly
		$f_{\upharpoonright_Y}, g_{\upharpoonright_Y}, h_{\upharpoonright_Y}, h'_{\upharpoonright_Y}\in \Omega(Y)$, 
		$f_{\upharpoonright_Y} = h_{\upharpoonright_Y} g_{\upharpoonright_Y}$, and 
		$g_{\upharpoonright_Y} =  h'_{\upharpoonright_Y} f_{\upharpoonright_Y}$. Hence $(f_{\upharpoonright_Y}, g_{\upharpoonright_Y})\in \mathcal{L}$ in $\Omega(Y)$.
		
		\vspace{0.1cm}
		\item[\rm(ii)] It is a dual of the proof of (i) and so we omit the details. 
		
		\vspace{0.1cm}
		\item[\rm(iii)] Since $\mathcal{D} = \mathcal{L}\circ \mathcal{R}$, it follows immediately from (i) and (ii).
		
	\end{enumerate}
\end{proof}

In the next result, we characterize $\mathcal{D}$-relation on $\overline{\Omega}(X,Y)$.
\begin{theorem}\label{green-d-theorem}
Let $f,g\in \overline{\Omega}(X,Y)$. Then $(f, g)\in \mathcal{D}$ in $\overline{\Omega}(X,Y)$ if and only if 
\begin{enumerate}
\item[\rm(i)] $|Xf\setminus Y|=|Xg\setminus Y|$;
\item[\rm(ii)]  there exists $\alpha \in S(Y)$ such that $|y(f_{\upharpoonright_Y})^{-1}|=|(y\alpha) (g_{\upharpoonright_Y})^{-1}|$ for all $y\in Y$.
\end{enumerate}
\end{theorem}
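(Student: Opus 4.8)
The plan is to use the standard identity $\mathcal{D} = \mathcal{R}\circ\mathcal{L}$ together with the descriptions of $\mathcal{L}$ and $\mathcal{R}$ on $\overline{\Omega}(X,Y)$ from Theorems \ref{l-green} and \ref{r-green}, Lemma \ref{green-betwen-restric}, and the description of Green's relations on $\Omega(Y)$ from \cite{koni-ac19} (equivalently, Lemmas \ref{green-l-surj} and \ref{green-r-surj} applied with $Y$ in the role of $X$, so that $\mathcal{L}$ on $\Omega(Y)$ is equality of all fibre sizes and $\mathcal{R}$ on $\Omega(Y)$ is equality of kernels). One preliminary observation will be used throughout: since $Yf = Y$ forces $Y\subseteq Xf$ for every $f\in\overline{\Omega}(X,Y)$, we have $\pi_f(Y) = \{xf^{-1}\colon x\in Y\}$, a block $B\in\pi(f)$ lies in $\pi_f(Y)$ precisely when $B\cap Y\neq\emptyset$, the blocks of $\pi(f)$ outside $\pi_f(Y)$ are exactly $\{xf^{-1}\colon x\in Xf\setminus Y\}$ (so $|Xf\setminus Y| = |\pi(f)\setminus\pi_f(Y)|$), and for $x\in Y$ the set $xf^{-1}\cap Y$ has size $|x(f_{\upharpoonright_Y})^{-1}|$.

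For the forward direction, assume $(f,g)\in\mathcal{D}$, so there is $h\in\overline{\Omega}(X,Y)$ with $(f,h)\in\mathcal{R}$ and $(h,g)\in\mathcal{L}$. By Theorem \ref{r-green}, $\pi(f)=\pi(h)$ and $\pi_f(Y)=\pi_h(Y)$; by Theorem \ref{l-green}, $Xh=Xg$ and $|y(h_{\upharpoonright_Y})^{-1}| = |y(g_{\upharpoonright_Y})^{-1}|$ for all $y\in Y$. Condition (i) is then immediate from the preliminary observation: $|Xf\setminus Y| = |\pi(f)\setminus\pi_f(Y)| = |\pi(h)\setminus\pi_h(Y)| = |Xh\setminus Y| = |Xg\setminus Y|$. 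For condition (ii), Lemma \ref{green-betwen-restric} gives $(f_{\upharpoonright_Y},h_{\upharpoonright_Y})\in\mathcal{R}$ and $(h_{\upharpoonright_Y},g_{\upharpoonright_Y})\in\mathcal{L}$ in $\Omega(Y)$; since $\mathcal{R}$ on $\Omega(Y)$ is equality of kernels and the surjective maps $f_{\upharpoonright_Y},h_{\upharpoonright_Y}$ index the blocks of their common kernel partition bijectively, we may define $\alpha\in S(Y)$ by letting $y\alpha$ be the unique element with $y(f_{\upharpoonright_Y})^{-1} = (y\alpha)(h_{\upharpoonright_Y})^{-1}$, and then equality of fibre sizes under $\mathcal{L}$ yields $|y(f_{\upharpoonright_Y})^{-1}| = |(y\alpha)(h_{\upharpoonright_Y})^{-1}| = |(y\alpha)(g_{\upharpoonright_Y})^{-1}|$ for all $y\in Y$.

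For the converse, assume (i) and (ii) and let $\alpha\in S(Y)$ be as in (ii); by (i) fix a bijection $\theta\colon Xf\setminus Y\to Xg\setminus Y$. Define $h\colon X\to X$ by $zh = (zf)\alpha$ when $zf\in Y$ and $zh = (zf)\theta$ when $zf\in Xf\setminus Y$. I would then verify: $h$ is well defined and $Yh=Y$ (indeed $h_{\upharpoonright_Y}=f_{\upharpoonright_Y}\alpha\in\Omega(Y)$), so $h\in\overline{\Omega}(X,Y)$; $\ker h=\ker f$, since $\alpha$ and $\theta$ are injective and values in $Y$ never coincide with values in $X\setminus Y$, hence $\pi(h)=\pi(f)$ and therefore $\pi_h(Y)=\pi_f(Y)$ by the preliminary observation, so $(f,h)\in\mathcal{R}$ by Theorem \ref{r-green}; and $Xh = \{x\alpha\colon x\in Y\}\cup\{x\theta\colon x\in Xf\setminus Y\} = Y\cup(Xg\setminus Y) = Xg$, while for $y\in Y$ one has $y(h_{\upharpoonright_Y})^{-1} = (y\alpha^{-1})(f_{\upharpoonright_Y})^{-1}$, whose cardinality equals $|(y\alpha^{-1})(f_{\upharpoonright_Y})^{-1}| = |y(g_{\upharpoonright_Y})^{-1}|$ by (ii), so $(h,g)\in\mathcal{L}$ by Theorem \ref{l-green}. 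Hence $(f,g)\in\mathcal{R}\circ\mathcal{L}=\mathcal{D}$.

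The main obstacle is the construction and verification in the converse: the single map $h$ must simultaneously replicate the kernel partition of $f$ together with the data of which blocks meet $Y$, have image exactly $Xg$, and have the fibre sizes on $Y$ prescribed by $g$ and $\alpha$. Reconciling these requirements is what dictates the split definition of $h$ (via $\alpha$ on the part of $X$ mapping into $Y$, via $\theta$ elsewhere), and the preliminary observation identifying $\pi_f(Y)$ with the blocks of $\pi(f)$ meeting $Y$ is the point that makes the $\mathcal{R}$-half of the verification — and the derivation of (i) in the forward direction — go through cleanly.
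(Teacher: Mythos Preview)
Your proof is correct and follows essentially the same approach as the paper's: both construct an intermediate element $h$ to witness the $\mathcal{D}$-relation via the characterisations of $\mathcal{L}$ and $\mathcal{R}$, the only (harmless) difference being that you use the factorisation $\mathcal{D}=\mathcal{R}\circ\mathcal{L}$ (building $h$ with $\ker h=\ker f$ and $Xh=Xg$) while the paper uses $\mathcal{D}=\mathcal{L}\circ\mathcal{R}$ (building $h$ with $\ker h=\ker g$ and $Xh=Xf$). Your forward direction also unpacks the reference to \cite[Theorem~4]{hony11} and \cite[Theorem~2.1(3)]{koni-ac19} directly via the preliminary observation and an explicit construction of~$\alpha$, which is fine.
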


\begin{proof}
Suppose that $(f, g)\in \mathcal{D}$ in $\overline{\Omega}(X,Y)$. 
\begin{enumerate}
\item[\rm(i)] Since $\overline{\Omega}(X,Y) \subseteq  \overline{T}(X,Y)$, it follows that $(f, g)\in \mathcal{D}$ in $\overline{T}(X,Y)$ and so $|Xf\setminus Y|=|Xg\setminus Y|$ by \cite[Theorem 4]{hony11}. 

\vspace{0.1cm}
\item[\rm(ii)] Then, by Lemma \ref{green-betwen-restric}(iii), we have $(f_{\upharpoonright_Y}, g_{\upharpoonright_Y})\in \mathcal{D}$ in $\Omega(Y)$. Hence, by \cite[Theorem 2.1(3)]{koni-ac19}, there exists $\alpha \in S(Y)$ such that $|y(f_{\upharpoonright_Y})^{-1}|=|(y\alpha) (g_{\upharpoonright_Y})^{-1}|$ for all $y\in Y$. 
\end{enumerate}	
\vspace{0.1cm}
Conversely, suppose that the given conditions hold. By (i), there exists a bijection $\beta \colon Xg\setminus Y\to Xf\setminus Y$. Now, we recall (ii). Define $h\colon X \to X$ by
\begin{align*}
	xh=
	\begin{cases}
		a_i\alpha ^{-1}  & \text{if $x\in X_i$ where $X_i\in \pi_g(Y)$ and $X_ig=a_i$},\\
		b_j\beta         & \text{if $x\in X_j$ where $X_j\in \pi(g)\setminus \pi_g(Y)$ and $X_jg=b_j$}.
	\end{cases}
\end{align*}
It is clear that $h\in \overline{\Omega}(X,Y)$. First, we observe that $Xf=Xh$ and $|y(f_{\upharpoonright_Y})^{-1}|=|y(h_{\upharpoonright_Y})^{-1}|$ for all $y\in Y$.  
Therefore $(f,h)\in \mathcal{L}$ in $\overline{\Omega}(X,Y)$ by Theorem \ref{l-green}. 
Next, we see that $\pi(h)=\pi(g)$ and $\pi_h(Y)=\pi_g(Y)$. Therefore $(h,g) \in \mathcal{R}$ in $\overline{\Omega}(X,Y)$ by Theorem \ref{r-green}. Since $\mathcal{D} = \mathcal{L} \circ \mathcal{R}$, we conclude that $(f,g)\in \mathcal{D}$ in $\overline{\Omega}(X,Y)$.
\end{proof}

In order to characterize $\mathcal{J}$-relation on $\overline{\Omega}(X,Y)$, we need the following lemma.
\begin{lemma}\label{lemma-j-green}
Let $f,g\in \overline{\Omega}(X,Y)$. Then $f=hgh'$ for some $h,h'\in \overline{\Omega}(X,Y)$ if and only if 
\begin{enumerate}
\item[\rm(i)] $|Xf\setminus Y|\leq |Xg\setminus Y|$;
\item[\rm(ii)] there exists a partition $\{P_y\colon y\in Y\}$ of $Y$ such that \[|y(f_{\upharpoonright_Y})^{-1}|\geq \displaystyle\sum_{w\in P_y}|w(g_{\upharpoonright_Y})^{-1}|\] for all $y\in Y$.
\end{enumerate}
\end{lemma}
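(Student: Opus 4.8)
The plan is to prove this characterization of factorizability $f = hgh'$ by combining the two preceding lemmas on one-sided divisibility (Lemma~\ref{green-l-surj} for left multiplication and Lemma~\ref{green-r-surj} for right multiplication) together with Konieczny's description of the principal factor structure on $\Omega(Y)$ from \cite[Theorem 2.1]{koni-ac19}. The key observation is that $f = hgh'$ if and only if $f = h k$ for some $k$ lying in the principal right ideal $g\,\overline{\Omega}(X,Y)^1$, i.e. $f \in \overline{\Omega}(X,Y)^1\, g\, \overline{\Omega}(X,Y)^1$, so I will first unpack the statement $f \in \overline{\Omega}(X,Y)^1 g \overline{\Omega}(X,Y)^1$ into the existence of an intermediate element $k = gh'$ with $f = hk$, and then translate the two conditions on $k$ coming from Lemma~\ref{green-r-surj} (for $k = gh'$) and Lemma~\ref{green-l-surj} (for $f = hk$).

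For the forward direction, suppose $f = hgh'$. Write $k = gh'$, so $f = hk$ with $h, k \in \overline{\Omega}(X,Y)$. Since $\overline{\Omega}(X,Y)\subseteq \overline{T}(X,Y)$, condition (i) should follow from the $\overline{T}(X,Y)$ analogue: $|Xf\setminus Y|\le |Xg\setminus Y|$ because each left/right multiplication can only shrink the ``outside $Y$'' part of the image — precisely, $Xf \subseteq Xk \subseteq Xg$ after a suitable identification, and combined with $Xk = X(gh')$ where $h'\in\overline{T}(X,Y)$ sends $X\setminus Y$ into $X$ and $Y$ onto $Y$; this is essentially \cite[Lemma 2]{hony11} applied twice, or can be quoted from the $\mathcal{J}$-characterization of $\overline{T}(X,Y)$ in \cite{hony11}. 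For condition (ii): restricting everything to $Y$ gives $f_{\upharpoonright_Y} = h_{\upharpoonright_Y} g_{\upharpoonright_Y} h'_{\upharpoonright_Y}$ with all three factors in $\Omega(Y)$, so $f_{\upharpoonright_Y}$ lies in the principal two-sided ideal generated by $g_{\upharpoonright_Y}$ in $\Omega(Y)$, and the desired partition $\{P_y : y\in Y\}$ with $|y(f_{\upharpoonright_Y})^{-1}| \ge \sum_{w\in P_y}|w(g_{\upharpoonright_Y})^{-1}|$ is exactly the condition from \cite[Theorem 2.1]{koni-ac19} governing the $\mathcal{J}$-order (or the ``$a \in S b S$'' relation) on $\Omega(Y)$.

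For the converse, assume (i) and (ii). The strategy is to build the intermediate element $k\in\overline{\Omega}(X,Y)$ realizing $f = hk$ and $k = gh'$. Using (ii) and \cite[Theorem 2.1]{koni-ac19}, I would first produce $\gamma, \delta \in \Omega(Y)$ with $f_{\upharpoonright_Y} = \gamma g_{\upharpoonright_Y}\delta$; set $\sigma = g_{\upharpoonright_Y}\delta \in \Omega(Y)$, which satisfies $|y(f_{\upharpoonright_Y})^{-1}| \ge |y\sigma^{-1}|$-type and $\pi$-refinement relations against $f_{\upharpoonright_Y}$ and $g_{\upharpoonright_Y}$ respectively that are tailored to feed Lemmas~\ref{green-l-surj} and \ref{green-r-surj}. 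On $X\setminus Y$, I would use (i) to choose an injection/selection arranging the outside-$Y$ part of a candidate image set for $k$ so that its image sits inside $Xg$ (giving the hypothesis $Xk\subseteq Xg$ needed to invoke Lemma~\ref{green-r-surj} for $k = gh'$) while containing $Xf\setminus Y$ and having a compatible kernel partition refining $\pi(g)$ and $\pi_g(Y)$. Concretely, define $k$ directly by specifying $Xk$, $\pi(k)$, and $\pi_k(Y)$ so that (a) $\pi(g)\preceq \pi(k)$ and $\pi_g(Y)\preceq\pi_k(Y)$ — then Lemma~\ref{green-r-surj} yields $h'$ with $k = gh'$; and (b) $Xf\subseteq Xk$ and $|y(f_{\upharpoonright_Y})^{-1}| \ge |y(k_{\upharpoonright_Y})^{-1}|$ for all $y$ — then Lemma~\ref{green-l-surj} yields $h$ with $f = hk$. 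Composing gives $f = hgh'$.

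The main obstacle will be the bookkeeping in the converse: one must simultaneously arrange the partition $\pi_k(Y)$ so that its block-sizes over each $y$ match the sum $\sum_{w\in P_y}|w(g_{\upharpoonright_Y})^{-1}|$ coming from condition (ii) (this is what makes $\pi_g(Y)\preceq\pi_k(Y)$ work with the right fiber cardinalities), while also fitting $Xf\setminus Y$ inside $Xk\setminus Y$ using only the cardinality inequality (i) — the subtlety is that (i) is merely an inequality of (possibly infinite) cardinals, so one needs a careful choice of which elements of $X\setminus Y$ map where, ensuring the $\ker$ and image data of $k$ are consistent with being a genuine element of $\overline{\Omega}(X,Y)$ (in particular $Yk = Y$). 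I expect this to be handled by first collapsing the combinatorial content of (ii) into a single $\sigma\in\Omega(Y)$ as above, after which $k$ is essentially forced on $Y$, and then treating $X\setminus Y$ separately via (i); the verification that $k\in\overline{\Omega}(X,Y)$ and that the two lemmas apply is then routine.
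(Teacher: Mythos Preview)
Your forward direction matches the paper's: condition (i) comes from the $\overline{T}(X,Y)$ result in \cite{hony11}, and (ii) from restricting to $Y$ and invoking Konieczny's characterization of the $\mathcal{J}$-order on $\Omega(Y)$ (note: the paper cites this as \cite[Theorem~2.2]{koni-ac19}, not 2.1).

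For the converse, your route is genuinely different from the paper's. The paper does \emph{not} build an intermediate element $k$ and appeal to Lemmas~\ref{green-l-surj} and \ref{green-r-surj}. Instead it imports two ready-made factorizations: from \cite[Theorem~5]{hony11} it obtains $\alpha,\gamma\in\overline{T}(X,Y)$ with $f=\alpha g\gamma$, and from \cite[Theorem~2.2]{koni-ac19} it obtains $\beta,\delta\in\Omega(Y)$ with $f_{\upharpoonright_Y}=\beta g_{\upharpoonright_Y}\delta$. It then defines $h$ and $h'$ explicitly by splicing $\beta,\delta$ (on $Y$) with $\alpha,\gamma$ (on $X\setminus Y$, with a further case split according to whether $xf\in Y$), and verifies $f=hgh'$ by a three-case computation. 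Your approach avoids importing the two-sided $\overline{T}(X,Y)$ result and stays entirely within the paper's own one-sided lemmas, which is conceptually cleaner; the price is that you must construct $k$ so that (a) its kernel coarsens $\ker(g)$ on all of $X$ (not just $Y$), which forces you to define $k$ on points $x\in X\setminus Y$ with $xg\in Y$ as $(xg)\delta$, and on points with $xg\in X\setminus Y$ via a surjection $Xg\setminus Y\to Xf\setminus Y$ supplied by (i). Once this is written out, the verification is indeed routine, so your plan goes through.
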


\begin{proof}
Suppose that $f=hgh'$ for some $h,h'\in \overline{\Omega}(X,Y)$.

\begin{enumerate}
\item[\rm(i)] Since  $\overline{\Omega}(X,Y) \subseteq \overline{T}(X,Y)$, we have $f,g,h,h' \in \overline{T}(X,Y)$ and so $|Xf\setminus Y|\leq |Xg\setminus Y|$ by \cite[Theorem 5]{hony11}.

\vspace{0.1cm}
\item[\rm(ii)] Note that $f_{\upharpoonright_Y}, g_{\upharpoonright_Y}, h_{\upharpoonright_Y}, h'_{\upharpoonright_Y} \in \Omega(Y)$ and $f_{\upharpoonright_Y}=h_{\upharpoonright_Y}g_{\upharpoonright_Y}h'_{\upharpoonright_Y}$. Hence, by \cite[Theorem 2.2]{koni-ac19}, there exists a partition $\{P_y\colon y\in Y\}$ of $Y$ such that $|y(f_{\upharpoonright_Y})^{-1}|\geq \sum_{w\in P_y}|w(g_{\upharpoonright_Y})^{-1}|$ for all $y\in Y$.
\end{enumerate}

\vspace{0.1cm}
Conversely, suppose that the given conditions hold. Since $Yf=Y$ and $Yg = Y$, we have $|Yf|= |Yg|$. Now, we recall (i). Then we get $|(Xf\setminus Y) \cup Y|\leq |(Xg\setminus Y) \cup Y|$ which simply gives $|Xf|\leq |Xg|$. Thus, by \cite[Theorem 5]{hony11}, there exist $\alpha, \gamma \in \overline{T}(X,Y)$ such that $f=\alpha g \gamma$.

\vspace{0.1cm}	

Now, we recall (ii). Then, \cite[Theorem 2.2]{koni-ac19}, there exist $\beta,\delta\in \Omega(Y)$ such that $f_{\upharpoonright_Y}=\beta g_{\upharpoonright_Y}\delta$. Let $x\in X\setminus Y$ such that $xf\in Y$. Write $xf = y$. Since $Yf=Y$, it follows that $Y\cap yf^{-1}\neq \emptyset$. Therefore we may choose an element $x' \in Y\cap yf^{-1}$. Define $h,h'\colon X \to X$ by
	\begin{align*}
		xh=
		\begin{cases}
			x\beta  & \text{if $x\in Y$},\\
			x'\beta &  \text{if $x\in X\setminus Y$ and $xf\in Y$},\\
			x\alpha  &  \text{if $x\in X\setminus Y$ and $xf\in X\setminus Y$}.
		\end{cases}\;
		\hspace{1.0cm}
		xh'=
		\begin{cases}
			x\delta  & \text{if $x\in Y$},\\
			x\gamma  &  \text{if $x\in X\setminus Y$}.
		\end{cases}
	\end{align*}
	
Clearly $h,h'\in \overline{\Omega}(X,Y)$. Finally, we show that $f = hgh'$. Let $x\in X$. Consider three possible cases.

\vspace{0.1cm}
\noindent \textbf{Case 1:} $x\in Y$. Then $x(hg) = (xh)g =(x\beta)g = (x\beta)g_{\upharpoonright_Y} = x(\beta g_{\upharpoonright_Y})$. Since $ x(\beta g_{\upharpoonright_Y}) \in Y$, we obtain
\[x(hgh')= (x(hg))h'= (x(\beta g_{\upharpoonright_Y}))h' = (x(\beta g_{\upharpoonright_Y}))\delta = x(\beta g_{\upharpoonright_Y}\delta)=xf_{\upharpoonright_Y}=xf.\]

\vspace{0.1cm}
\noindent \textbf{Case 2:} $x\in X\setminus Y$ and $xf\in Y$. Then $x(hg) =(xh)g =  (x'\beta)g = (x'\beta)g_{\upharpoonright_Y} = x'(\beta g_{\upharpoonright_Y})$. Since $ x'(\beta g_{\upharpoonright_Y}) \in Y$, we obtain
\[x(hgh')= (x(hg))h'= (x'(\beta g_{\upharpoonright_Y}))h' = (x'(\beta g_{\upharpoonright_Y}))\delta = x'(\beta g_{\upharpoonright_Y}\delta)=x'f_{\upharpoonright_Y}=x'f = xf.\]

\vspace{0.1cm}
\noindent \textbf{Case 3:} $x\in X\setminus Y$ and $xf\in X\setminus Y$. Then $x(hg) =(xh)g = (x\alpha)g = x(\alpha g)$. We now claim that $x(\alpha g) \in X\setminus Y$. We assume by contradiction that $x(\alpha g)\in Y$. Then $xf = x(\alpha g \gamma) = (x(\alpha g))\gamma \in Y$ which is a contradiction of the assumption $xf\in X\setminus Y$. Hence $x(\alpha g) \in X\setminus Y$. Using these facts, we obtain

\[x(hgh') = (x(hg)) h' = (x(\alpha g)) h' = (x(\alpha g)) \gamma = x(\alpha g \gamma) = xf.\]
 
\vspace{0.1cm}
Hence, from all three cases, we conclude that $f=hgh'$ as required. 
\end{proof}

\begin{theorem}\label{green-j-theorem}
	Let $f,g\in \overline{\Omega}(X,Y)$. Then $(f, g)\in \mathcal{J}$ in $ \overline{\Omega}(X,Y)$ if and only if $|Xf\setminus Y|=|Xg\setminus Y|$, and there exist partitions $\{P_y\colon y\in Y\}$ and $\{Q_y\colon y\in Y\}$ of $Y$ such that 
	 \[|y(f_{\upharpoonright_Y})^{-1}|\geq \sum_{w\in P_y}|w(g_{\upharpoonright_Y})^{-1}|	\hspace{0.4cm}\text{ and } \hspace{0.4cm} |y(g_{\upharpoonright_Y})^{-1}|\geq \sum_{w\in Q_y}|w(f_{\upharpoonright_Y})^{-1}|\]
	 for all $y\in Y$
	\end{theorem}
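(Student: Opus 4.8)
The plan is to read this theorem off Lemma \ref{lemma-j-green} by applying it twice. First I would observe that $\overline{\Omega}(X,Y)$ is a monoid, since $I_X \in \overline{\Omega}(X,Y)$ (because $Y I_X = Y$). Consequently there is no need to adjoin an external identity when computing principal two-sided ideals, and for $f,g\in\overline{\Omega}(X,Y)$ we have $(f,g)\in\mathcal{J}$ if and only if $f$ and $g$ generate the same principal two-sided ideal, which holds if and only if $f = hgh'$ and $g = kfk'$ for some $h,h',k,k'\in\overline{\Omega}(X,Y)$.

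Next I would invoke Lemma \ref{lemma-j-green}. Applying it to the factorization $f = hgh'$ gives $|Xf\setminus Y|\le |Xg\setminus Y|$ together with a partition $\{P_y\colon y\in Y\}$ of $Y$ such that $|y(f_{\upharpoonright_Y})^{-1}|\ge \sum_{w\in P_y}|w(g_{\upharpoonright_Y})^{-1}|$ for all $y\in Y$. Applying the same lemma to $g = kfk'$, with the roles of $f$ and $g$ interchanged, gives $|Xg\setminus Y|\le |Xf\setminus Y|$ and a partition $\{Q_y\colon y\in Y\}$ of $Y$ with $|y(g_{\upharpoonright_Y})^{-1}|\ge \sum_{w\in Q_y}|w(f_{\upharpoonright_Y})^{-1}|$ for all $y\in Y$. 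Combining the two cardinality inequalities yields $|Xf\setminus Y| = |Xg\setminus Y|$, and together with the existence of the two partitions this is precisely the asserted condition. Conversely, if $|Xf\setminus Y| = |Xg\setminus Y|$ and the partitions $\{P_y\}$ and $\{Q_y\}$ exist, then both hypotheses (i) and (ii) of Lemma \ref{lemma-j-green} are satisfied for the pair $(f,g)$, so $f = hgh'$ for some $h,h'\in\overline{\Omega}(X,Y)$, and likewise for the pair $(g,f)$, so $g = kfk'$ for some $k,k'\in\overline{\Omega}(X,Y)$; hence $(f,g)\in\mathcal{J}$.

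I do not expect a real obstacle: the entire analytic content lies in Lemma \ref{lemma-j-green}, and the proof is essentially the symmetrization of that lemma. The only points deserving a sentence of care are the reduction of $\mathcal{J}$-equivalence to the two factorizations $f = hgh'$ and $g = kfk'$, which is valid because $\overline{\Omega}(X,Y)$ already contains $I_X$, and the remark that the two partitions $\{P_y\}$ and $\{Q_y\}$ are chosen independently (there is no compatibility condition linking them), so that applying the lemma separately in the two directions genuinely suffices.
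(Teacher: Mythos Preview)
Your proposal is correct and matches the paper's own proof, which simply states that the theorem follows immediately from Lemma \ref{lemma-j-green}. Your explicit observation that $\overline{\Omega}(X,Y)$ contains $I_X$ (so that $\mathcal{J}$-equivalence amounts to the two factorizations $f=hgh'$ and $g=kfk'$) is a helpful clarification that the paper leaves implicit.
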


\begin{proof}
It follows immediately from Lemma \ref{lemma-j-green}.
\end{proof}

\vspace{0.1cm}
It is well-known that $\mathcal{D} \subseteq \mathcal{J}$ on every semigroup, and $\mathcal{D} = \mathcal{J}$ on $T(X)$. However, the following example shows that $\mathcal{D} = \mathcal{J}$ on $\overline{\Omega}(X,Y)$ is not always true.

\begin{example}
Let $Y=\mathbb{N}$ be the set of positive integers, and let $X=\mathbb{N}\cup \{0\}$. Define $f,g\colon X\to X$ by
	
\begin{equation*}
	xf =
	\begin{cases}
		1 & \text{if $x\in \{0,1\} \cup (2\mathbb{N}+1$)},\\
		\frac{x}{2}+1 & \text{if $x\in 2\mathbb{N}$}.
	\end{cases}
\hspace{0.4cm}
	xg =
	\begin{cases}
		1 & \text{if $x\in \{1\} \cup (4\mathbb{N}+1$)},\\
		2 & \text{if $x\in \{0\} \cup (4\mathbb{N}-1$)},\\
		\frac{x}{2}+2 & \text{if $x\in 2\mathbb{N}$}.
	\end{cases}
\end{equation*}		
	
Obviously $f,g\in \overline{\Omega}(X,Y)$. First, we show that $(f,g)\in \mathcal{J}$ in $\overline{\Omega}(X,Y)$. Observe that $Xf = Xg = Y$ and so $|Xf\setminus Y|=|Xg\setminus Y|$. Now, consider the partition $\{\{y\}\colon y\in Y\}$ of $Y$. Then, for all $y\in Y$, we obtain $|y(g_{\upharpoonright_Y})^{-1}| \ge \sum_{w\in \{y\}}|w(f_{\upharpoonright_Y})^{-1}|$. Next, consider the partition $\{\{1,2\}, \{y\}\colon y\in Y\setminus\{1,2\}\}$ of $Y$. Then, for all $y\in Y$, we obtain $|y(f_{\upharpoonright_Y})^{-1}| \ge \sum_{w\in P_y}|w(g_{\upharpoonright_Y})^{-1}|$.
Using these facts, we conclude from Theorem \ref{green-j-theorem} that $(f,g)\in \mathcal{J}$ on $\overline{\Omega}(X,Y)$.

\vspace{0.1cm}
Second, we assume by contradiction that $(f,g)\in \mathcal{D}$ in $\overline{\Omega}(X,Y)$. Then, by Theorem \ref{green-d-theorem}, there exists a bijection $\alpha \in S(Y)$ such that $|y(f_{\upharpoonright_Y})^{-1}|=|(y\alpha) (g_{\upharpoonright_Y})^{-1}|$ for all $y\in Y$. Now, we consider two possible cases of $1\alpha$.

\vspace{0.1cm}
\noindent \textbf{Case 1:} $1\alpha = 1$. Then for $y = 2\alpha^{-1}$, we get $|y(f_{\upharpoonright_Y})^{-1}| = 1$ and 
$|(y\alpha) (g_{\upharpoonright_Y})^{-1}| = |2 (g_{\upharpoonright_Y})^{-1}| = \infty$, a contradiction.

\vspace{0.1cm}
\noindent \textbf{Case 2:} $1\alpha \neq 1$. Then for $y = 1\alpha^{-1}$, we get $|y(f_{\upharpoonright_Y})^{-1}| = 1$ and 
$|(y\alpha) (g_{\upharpoonright_Y})^{-1}| = |1 (g_{\upharpoonright_Y})^{-1}| = \infty$, a contradiction.

\vspace{0.1cm}
Hence we conclude that$(f,g)\notin \mathcal{D}$ on $\overline{\Omega}(X,Y)$.
\end{example}

\vspace{0.1cm}
In the next result, we give a necessary and sufficient condition for $\mathcal{D} =\mathcal{J}$ on $\overline{\Omega}(X,Y)$.

\begin{theorem}\label{D=J}
We have $\mathcal{D}=\mathcal{J}$ on $\overline{\Omega}(X,Y)$ if and only if $Y$ is finite.
\end{theorem}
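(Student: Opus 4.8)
The plan is to reduce everything to the descriptions of $\mathcal{D}$ and $\mathcal{J}$ obtained in Theorems~\ref{green-d-theorem} and~\ref{green-j-theorem}. Since $\mathcal{D}\subseteq\mathcal{J}$ in every semigroup, it suffices to decide when $\mathcal{J}\subseteq\mathcal{D}$ on $\overline{\Omega}(X,Y)$. Suppose first that $Y$ is finite. Then a surjective transformation of $Y$ is a bijection, so $f_{\upharpoonright_Y},g_{\upharpoonright_Y}\in S(Y)$ and $|y(f_{\upharpoonright_Y})^{-1}|=|y(g_{\upharpoonright_Y})^{-1}|=1$ for every $y\in Y$. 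Hence the partition conditions of Theorem~\ref{green-j-theorem} hold trivially (all blocks singletons) and condition~(ii) of Theorem~\ref{green-d-theorem} holds with $\alpha=I_Y$; in both cases the relation collapses to the single requirement $|Xf\setminus Y|=|Xg\setminus Y|$. Therefore $(f,g)\in\mathcal{J}$ forces $(f,g)\in\mathcal{D}$, and $\mathcal{D}=\mathcal{J}$.

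For the converse I would argue by contraposition: assuming $Y$ infinite, I would exhibit a pair $(f,g)\in\mathcal{J}\setminus\mathcal{D}$, lifting the Example above to an arbitrary infinite $Y$. Fix a countably infinite subset $Z=\{z_1,z_2,\dots\}\subseteq Y$. Define $\phi,\psi\in\Omega(Y)$ fixing $Y\setminus Z$ pointwise, and on $Z$: let $\phi$ send every odd-indexed $z_n$ to $z_1$ and send $z_{2k}$ to $z_{k+1}$; let $\psi$ send $z_n$ to $z_1$ if $n\equiv 1\pmod{4}$, send $z_n$ to $z_2$ if $n\equiv 3\pmod{4}$, and send $z_{2k}$ to $z_{k+2}$. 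One checks readily that $\phi$ and $\psi$ are surjective, that $\phi$ has exactly one kernel class of size greater than $1$, namely an infinite one over $z_1$, and that $\psi$ has exactly two, both infinite, over $z_1$ and $z_2$. Now let $f,g\in\overline{\Omega}(X,Y)$ fix $X\setminus Y$ pointwise with $f_{\upharpoonright_Y}=\phi$ and $g_{\upharpoonright_Y}=\psi$; then $Xf=Xg=X$, so $|Xf\setminus Y|=|Xg\setminus Y|=|X\setminus Y|$.

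To see $(f,g)\in\mathcal{J}$ via Theorem~\ref{green-j-theorem}, for the first partition put $P_{z_1}=\{z_1,z_2\}$ and let the blocks $P_y$ with $y\neq z_1$ be the singletons of a partition of $Y\setminus\{z_1,z_2\}$ indexed by $Y\setminus\{z_1\}$, which exists because $|Y\setminus\{z_1\}|=|Y\setminus\{z_1,z_2\}|$ since $Y$ is infinite; for the second partition take $Q_y=\{y\}$ for all $y$. The required fibre-size inequalities then hold because the only kernel classes of $\phi$ (resp.\ $\psi$) of size greater than $1$ sit over $z_1$ (resp.\ over $z_1$ and $z_2$), and $z_1$ carries an infinite kernel class for both. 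To see $(f,g)\notin\mathcal{D}$, suppose some $\alpha\in S(Y)$ satisfied condition~(ii) of Theorem~\ref{green-d-theorem}; since $\{y\in Y:|y(f_{\upharpoonright_Y})^{-1}|>1\}=\{z_1\}$ and $\{y\in Y:|y(g_{\upharpoonright_Y})^{-1}|>1\}=\{z_1,z_2\}$, that condition would force $\alpha^{-1}(\{z_1,z_2\})=\{z_1\}$, which is impossible for a bijection $\alpha$ of $Y$. Hence $\mathcal{D}\neq\mathcal{J}$.

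The "if" direction is routine; the substance lies in the "only if" direction, and the main obstacle there is combinatorial bookkeeping: constructing maps in $\Omega(Y)$ with the prescribed kernel-class profiles over an arbitrary infinite set and --- the genuinely delicate point --- arranging the index sets of Theorem~\ref{green-j-theorem} into honest partitions of $Y$, which is precisely where the concrete construction over $\mathbb{N}$ in the Example must be upgraded using cardinal arithmetic on $Y$. (Alternatively, the failure of $\mathcal{D}$ could be deduced from Lemma~\ref{green-betwen-restric}(iii) and the known description of $\mathcal{D}$ on $\Omega(Y)$, but the direct verification above keeps the argument self-contained.)
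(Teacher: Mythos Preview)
Your proof is correct and follows essentially the same strategy as the paper's: for the ``only if'' direction you build a pair $(f,g)$ whose restrictions to $Y$ have differing numbers of non-singleton kernel classes (one versus two), then verify $\mathcal{J}$ via Theorem~\ref{green-j-theorem} and rule out $\mathcal{D}$ via Theorem~\ref{green-d-theorem}. The details differ only cosmetically---the paper uses subsets $A,B\subseteq Y$ of full cardinality $|Y|$ and sends $X\setminus Y$ into $Y$, whereas you work inside a countable $Z\subseteq Y$ and fix $X\setminus Y$ pointwise, and your roles of $f$ and $g$ are swapped relative to the paper's---but the mechanism is identical. Your handling of the indexing of the partition $\{P_y\}$ by $Y$ (choosing a bijection $Y\setminus\{z_1\}\to Y\setminus\{z_1,z_2\}$) is in fact more explicit than the paper's. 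For the ``if'' direction you argue directly from Theorems~\ref{green-d-theorem} and~\ref{green-j-theorem}, which is more self-contained than the paper's route of identifying $\overline{\Omega}(X,Y)$ with $\overline{S}(X,Y)$ and citing \cite{somma-tjm21}.
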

\begin{proof}
	Suppose that $\mathcal{D}=\mathcal{J}$ in $\overline{\Omega}(X,Y)$. On the contrary, assume that $Y$ is infinite. Then there exist two disjoint subsets $A, B \subseteq Y$ such that $|Y| =|A|=|B|=|Y\setminus (A\cup B)|$. Let $p, q \in Y$ be distinct elements. Note that $|Y\setminus (A\cup B)|=|Y\setminus \{p,q\}|$ and $|Y\setminus (A\cup B)|=|Y\setminus \{p\}|$. It follows that there exist bijections $\alpha \colon Y\setminus (A\cup B)\to Y\setminus\{p,q\}$ and $\beta \colon Y\setminus (A\cup B)\to Y\setminus\{p\}$. Define $f,g\colon X\to X$ by
	\begin{align*}
		xf=
		\begin{cases}
			p       &  \text{if $x\in A\cup (X\setminus Y)$},\\
			q       &  \text{if $x\in B$},\\
			x\alpha &  \text{otherwise}.
		\end{cases}\;
		\hspace{0.3cm}\text{ and } \hspace{0.3cm}
		xg=
		\begin{cases}
			p       &  \text{if $x\in A\cup B\cup (X\setminus Y)$},\\
			x\beta  &  \text{otherwise}.
		\end{cases}
	\end{align*}
It is clear that  $f,g\in \overline{\Omega}(X,Y)$. First, we show that $(f,g)\in \mathcal{J}$ in $\overline{\Omega}(X,Y)$. Observe that $|Xf\setminus Y|=|Xg\setminus Y|$. Now, consider the partition $\{\{y\}\colon y\in Y\}$ of $Y$. Then, for all $y\in Y$, we obtain $|y(f_{\upharpoonright_Y})^{-1}| \ge \sum_{w\in \{y\}}|w(g_{\upharpoonright_Y})^{-1}|$.
 Next, consider the partition $\{\{p,q\}, \{y\}\colon y\in Y\setminus\{p,q\}\}$ of $Y$. Then, for all $y\in Y$, we obtain $|y(g_{\upharpoonright_Y})^{-1}| \ge \sum_{w\in Q_y}|w(f_{\upharpoonright_Y})^{-1}|$. Using these facts, we conclude from Theorem \ref{green-j-theorem} that $(f,g)\in \mathcal{J}$ on $\overline{\Omega}(X,Y)$.

 \vspace{0.1cm}
 Second, observe that the number of blocks of infinite cardinalities in the partitions $\pi(f_{\upharpoonright_Y})$ and $\pi(g_{\upharpoonright_Y})$ of $Y$ are two and one, respectively. Therefore there does not exist any bijection $\alpha \in S(Y)$ such that $|y(f_{\upharpoonright_Y})^{-1}|=|(y\alpha) (g_{\upharpoonright_Y})^{-1}|$ for all $y\in Y$. It follows from Theorem \ref{green-d-theorem} that $(f,g)\notin \mathcal{D}$ in $\overline{\Omega}(X,Y)$ which is a contradiction of the assumption  $\mathcal{D}=\mathcal{J}$ on $\overline{\Omega}(X,Y)$. Hence $Y$ is finite.
	
\vspace{0.1cm}
Conversely, suppose that $Y$ is finite. Then $\overline{\Omega}(X,Y) = \overline{S}(X,Y)$. Hence, by \cite[Theorem 3.1(4)]{somma-tjm21}, we have $\mathcal{D}=\mathcal{J}$ on $\overline{\Omega}(X,Y)$  
\end{proof}

\vspace{0.1cm}
Because of Example 2.3 of \cite{koni-ac19}, we remark that $\mathcal{D}=\mathcal{J}$ on $\Omega(X)$ if and only if $X$ is finite. Using this, we have the following corollary of Theorem \ref{D=J}.

\begin{corollary}
We have $\mathcal{D}=\mathcal{J}$ on $\overline{\Omega}(X,Y)$ if and only if $\mathcal{D}=\mathcal{J}$ on $\Omega(Y)$.
\end{corollary}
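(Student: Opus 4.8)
The plan is to chain together two equivalences that have already been established, with ``$Y$ is finite'' serving as the common middle term. First I would invoke Theorem \ref{D=J}, which gives that $\mathcal{D}=\mathcal{J}$ on $\overline{\Omega}(X,Y)$ if and only if $Y$ is finite. Next I would apply the remark recorded just above the statement, namely that (by Example 2.3 of \cite{koni-ac19}) $\mathcal{D}=\mathcal{J}$ on $\Omega(Z)$ if and only if $Z$ is finite, specialised to $Z=Y$; this yields that $\mathcal{D}=\mathcal{J}$ on $\Omega(Y)$ if and only if $Y$ is finite.

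Putting these two biconditionals together, $\mathcal{D}=\mathcal{J}$ on $\overline{\Omega}(X,Y)$ holds precisely when $Y$ is finite, which in turn holds precisely when $\mathcal{D}=\mathcal{J}$ on $\Omega(Y)$, giving the desired equivalence. Since $\Omega(Y)$ makes sense ($Y$ is a nonempty set, so $\Omega(Y)$ is the semigroup of surjective self-maps of $Y$), there is no degenerate case to worry about; the argument is a purely formal composition of the two cited results.

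The only ``obstacle'' is bookkeeping: I should make sure the remark about $\Omega(X)$ is applied with the right set (here $Y$, not $X$), and that it is legitimate to quote it, which it is, since $\overline{\Omega}(Y,Y)=\Omega(Y)$ is a genuine instance covered both by the general theory of $\Omega(\cdot)$ in \cite{koni-ac19} and, redundantly, by Theorem \ref{D=J} with $X$ replaced by $Y$. Thus the proof is essentially one line: ``Combine Theorem \ref{D=J} with the fact that $\mathcal{D}=\mathcal{J}$ on $\Omega(Y)$ if and only if $Y$ is finite.'' No new construction or computation is needed.
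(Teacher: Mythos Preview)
Your proposal is correct and matches the paper's approach exactly: the corollary is stated immediately after the remark that $\mathcal{D}=\mathcal{J}$ on $\Omega(X)$ iff $X$ is finite, and the paper simply says ``Using this, we have the following corollary of Theorem \ref{D=J},'' i.e., it combines Theorem \ref{D=J} with that remark (specialised to $Y$) precisely as you do.
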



\section{Ideals of $\overline{\Omega}(X,Y)$}

In this section, we present a characterization of ideals of $\overline{\Omega}(X, Y)$. Moreover, we determine the kernel of $\overline{\Omega}(X, Y)$. To prove the main results of this section, we need the next five lemmas.

\vspace{0.1cm}
 For an infinite set $A$ and a map $\alpha \in \Omega(A)$, let
 \[N(\alpha)=\{a\in A\colon |a\alpha^{-1}|<|A|\}\] and $n(\alpha)=|N(\alpha)|$. It is obvious that  $0\leq n(\alpha)\leq |A|$ for all $\alpha \in \Omega(A)$.

\begin{lemma}\label{n(alpha)}
If $A$ is an infinite set and $\alpha,\beta \in \Omega(A)$, then $n(\alpha \beta)\leq \min\{n(\alpha),n(\beta)\}$.
\end{lemma}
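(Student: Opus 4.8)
The plan is to prove the two inequalities $n(\alpha\beta)\le n(\beta)$ and $n(\alpha\beta)\le n(\alpha)$ separately and then take the minimum. The workhorse is the following elementary identity: since we compose from left to right, for every $c\in A$ we have
\[
c(\alpha\beta)^{-1}=(c\beta^{-1})\alpha^{-1}=\bigcup_{b\in c\beta^{-1}}b\alpha^{-1},
\]
and because $\alpha,\beta\in\Omega(A)$ are surjective, $c\beta^{-1}\neq\emptyset$ and every set $b\alpha^{-1}$ occurring in the union is nonempty; moreover the union is pairwise disjoint since $\alpha$ is a function.

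First I would show $N(\alpha\beta)\subseteq N(\beta)$, which gives $n(\alpha\beta)\le n(\beta)$, arguing by contrapositive. If $c\notin N(\beta)$, then $|c\beta^{-1}|=|A|$, so the displayed disjoint union is a union of $|A|$-many nonempty sets and hence has cardinality at least $|A|$; being a subset of $A$, it has cardinality exactly $|A|$, so $c\notin N(\alpha\beta)$. Next I would prove $n(\alpha\beta)\le n(\alpha)$. Note that the literal inclusion $N(\alpha\beta)\subseteq N(\alpha)$ is not what is at hand; instead, fix $c\in N(\alpha\beta)$. For each $b\in c\beta^{-1}$ we have $b\alpha^{-1}\subseteq c(\alpha\beta)^{-1}$, hence $|b\alpha^{-1}|\le|c(\alpha\beta)^{-1}|<|A|$, so $b\in N(\alpha)$. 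Thus $c\beta^{-1}$ is a nonempty subset of $N(\alpha)$ for every $c\in N(\alpha\beta)$, and since $\beta$ is a function these preimage sets are pairwise disjoint; selecting one element from each $c\beta^{-1}$ produces an injection $N(\alpha\beta)\hookrightarrow N(\alpha)$, whence $n(\alpha\beta)\le n(\alpha)$. Combining, $n(\alpha\beta)\le\min\{n(\alpha),n(\beta)\}$.

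The only point needing care is the cardinal arithmetic: that a disjoint union of $\kappa$-many nonempty sets has cardinality $\ge\kappa$ (immediate, by choosing one point from each block), and that a subset of $A$ whose cardinality is not $<|A|$ must have cardinality exactly $|A|$. Everything else is bookkeeping with the identity $c(\alpha\beta)^{-1}=(c\beta^{-1})\alpha^{-1}$, and no choice principle beyond these finitely described selections is used. I do not anticipate a genuine obstacle here; the subtlety to flag is simply that the $\alpha$-side inequality requires the injection argument rather than a naive set inclusion.
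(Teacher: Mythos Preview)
Your proof is correct and follows essentially the same route as the paper's: both arguments establish $N(\alpha\beta)\subseteq N(\beta)$ for the first inequality and, for the second, show that $c\beta^{-1}\subseteq N(\alpha)$ whenever $c\in N(\alpha\beta)$, then pass to a cardinality comparison. The only cosmetic differences are that the paper argues the first inclusion directly (via $|a\beta^{-1}|\le |(a\beta^{-1})\alpha^{-1}|$) rather than by contrapositive, and for the second inequality concludes via $N(\alpha\beta)\subseteq N(\alpha)\beta$ and $|N(\alpha)\beta|\le |N(\alpha)|$ rather than your explicit injection built from a choice of representatives.
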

\begin{proof}
First, we show that $n(\alpha \beta)\leq n(\beta)$. For this, let $a\in N(\alpha \beta)$. Then $|a(\alpha \beta)^{-1}|<|A|$ and so $|a\beta^{-1}|\leq |(a\beta^{-1})\alpha^{-1}| = |a(\alpha \beta)^{-1}|<|A|$. Therefore $a\in N(\beta)$ and so $N(\alpha \beta)\subseteq N(\beta)$. Hence $n(\alpha \beta)\leq n(\beta)$.
	
\vspace{0.1cm}	
Next, we show that $n(\alpha \beta)\leq n(\alpha)$. For this, let $a\in N(\alpha \beta)$. Then $|a(\alpha \beta)^{-1}|<|A|$. Let $x\in a\beta^{-1}$. Then $|x\alpha^{-1}|\leq |(a\beta^{-1})\alpha^{-1}| =  |a(\alpha \beta)^{-1}| <|A|$. This implies that $x \in N(\alpha)$ and so $a = x\beta \in N(\alpha)\beta$. Therefore $N(\alpha \beta)\subseteq N(\alpha)\beta$ and so $n(\alpha \beta)\leq |N(\alpha)\beta|$. Since $|N(\alpha)\beta|\le n(\alpha)$, we subsequently have $n(\alpha \beta)\leq n(\alpha)$. Thus $n(\alpha \beta)\leq \min\{n(\alpha),n(\beta)\}$.	
\end{proof}

\vspace{0.1cm}
For two cardinal numbers $s$ and $t$ with $0\leq s\leq |Y|$ and $0\leq t\leq |X\setminus Y|$, let 
\[J(s,t)=\{f\in \overline{\Omega}(X,Y)\colon n(f_{\upharpoonright_Y})\leq s\mbox{ and }|Xf\setminus Y|\leq t\}.\]

\begin{lemma}\label{ideal-J(s,t)}
Let $s$ and $t$ be cardinal numbers such that $0\leq s\leq |Y|$ and $0\leq t\leq |X\setminus Y|$. If $Y$ is an infinite subset of $X$, then $J(s,t)$ is an ideal of $\overline{\Omega}(X,Y)$.
\end{lemma}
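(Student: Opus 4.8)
The plan is to verify directly that $J(s,t)$ absorbs multiplication by arbitrary elements of $\overline{\Omega}(X,Y)$ on both sides; the hypothesis that $Y$ is infinite enters only to guarantee that $n(\cdot)$ is defined on $\Omega(Y)$. First I would record the two elementary facts that make the computation run: for any $h, f\in\overline{\Omega}(X,Y)$ one has $(hf)_{\upharpoonright_Y} = h_{\upharpoonright_Y}f_{\upharpoonright_Y}$ and $(fh)_{\upharpoonright_Y} = f_{\upharpoonright_Y}h_{\upharpoonright_Y}$ (both factors send $Y$ into $Y$, so their restrictions to $Y$ compose), with $h_{\upharpoonright_Y}, f_{\upharpoonright_Y}\in\Omega(Y)$; and $Y\subseteq Xf$ (since $Yf = Y$). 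I would also observe $J(s,t)\neq\emptyset$ by taking a surjection $Y\to Y$ all of whose fibres have cardinality $|Y|$ and extending it over $X\setminus Y$ with values in $Y$, so the resulting $f$ satisfies $n(f_{\upharpoonright_Y}) = 0\le s$ and $|Xf\setminus Y| = 0\le t$.

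For left absorption, take $f\in J(s,t)$ and $h\in\overline{\Omega}(X,Y)$. Then $(hf)_{\upharpoonright_Y} = h_{\upharpoonright_Y}f_{\upharpoonright_Y}$, so Lemma \ref{n(alpha)} gives $n\big((hf)_{\upharpoonright_Y}\big)\le n(f_{\upharpoonright_Y})\le s$; and $X(hf) = (Xh)f\subseteq Xf$ forces $X(hf)\setminus Y\subseteq Xf\setminus Y$, hence $|X(hf)\setminus Y|\le t$. Thus $hf\in J(s,t)$. For right absorption, take $f\in J(s,t)$ and $h\in\overline{\Omega}(X,Y)$. Again $(fh)_{\upharpoonright_Y} = f_{\upharpoonright_Y}h_{\upharpoonright_Y}$, so $n\big((fh)_{\upharpoonright_Y}\big)\le n(f_{\upharpoonright_Y})\le s$ by Lemma \ref{n(alpha)}. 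For the image bound, write $Xf = Y\cup(Xf\setminus Y)$ using $Y\subseteq Xf$; then $X(fh) = (Xf)h = Yh\cup(Xf\setminus Y)h = Y\cup(Xf\setminus Y)h$, so $X(fh)\setminus Y\subseteq(Xf\setminus Y)h$ and $|X(fh)\setminus Y|\le|Xf\setminus Y|\le t$. Therefore $fh\in J(s,t)$, and $J(s,t)$ is a two-sided ideal of $\overline{\Omega}(X,Y)$.

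The only step needing a moment's care is the image bound for $fh$: the inclusion $Y\subseteq Xf$ is precisely what keeps the part of the image of $fh$ lying outside $Y$ inside $(Xf\setminus Y)h$, after which the monotonicity $|Bh|\le|B|$ finishes it. Everything else reduces immediately to Lemma \ref{n(alpha)} and the restriction-composition identities, so I do not anticipate a genuine obstacle.
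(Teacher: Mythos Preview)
Your proof is correct and follows essentially the same approach as the paper's: nonemptiness via a map with all $Y$-fibres of size $|Y|$ and image $Y$, the $n(\cdot)$ bound via Lemma~\ref{n(alpha)}, and the image bound via $X(fh)\setminus Y\subseteq (Xf\setminus Y)h$. The only cosmetic difference is that the paper handles both sides simultaneously by showing $gfh\in J(s,t)$ (and then specializes $g$ or $h$ to the identity), whereas you verify left and right absorption separately; the underlying computations are identical.
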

\begin{proof}
First, we show that $J(s,t) \neq \emptyset$. For this, consider a map $f\in T(X)$ such that $Xf = Y$ and $|y(f_{\upharpoonright_Y})^{-1}| = |Y|$ for all $y\in Y$. Obviously $f\in \overline{\Omega}(X,Y)$. Also, it is clear that $n(f_{\upharpoonright_Y}) = 0$ and $|Xf\setminus Y| = 0$. Therefore $f\in J(s, t)$ and so $J(s,t) \neq \emptyset$.

\vspace{0.1cm}
To show that $J(s,t)$ is an ideal of $\overline{\Omega}(X,Y)$, let $f\in J(s,t)$ and $g,h\in \overline{\Omega}(X,Y)$. Then, by definition of $J(s,t)$, we have $n(f_{\upharpoonright_Y})\leq s$ and $|Xf\setminus Y|\leq t$. Note that 
    $(gfh)_{\upharpoonright_Y}=g_{\upharpoonright_Y}f_{\upharpoonright_Y}h_{\upharpoonright_Y}$. Therefore, by Lemma \ref{n(alpha)}, we obtain \[n((gfh)_{\upharpoonright_Y})=n(g_{\upharpoonright_Y}f_{\upharpoonright_Y}h_{\upharpoonright_Y})\leq n(f_{\upharpoonright_Y}h_{\upharpoonright_Y})\leq n(f_{\upharpoonright_Y})\leq s.\]
    
Since $Yh = Y$, we have $X(fh)\setminus Y \subseteq (Xf\setminus Y)h$. Also, since $Xg \subseteq X$, we have
$X(gfh) \subseteq X(fh)$. Using these facts, we obtain  \[|X(gfh)\setminus Y|\leq|X(fh)\setminus Y|\leq |(Xf\setminus Y)h|\leq |Xf\setminus Y|\leq t.\]
Hence, by definition of $J(s,t)$, we have $gfh\in J(s,t)$ as required. 
\end{proof}

\begin{lemma}
Let $s$ and $t$ be cardinal numbers such that $0\leq s\leq |Y|$ and $0\leq t\leq |X\setminus Y|$. Then \[\bigcup_{f\in J(s, t)} J_f = J(s, t).\]
\end{lemma}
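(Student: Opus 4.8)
The plan is to deduce the identity from Lemma \ref{ideal-J(s,t)} together with the elementary fact that every ideal of a semigroup is a union of its $\mathcal{J}$-classes. As in the definition of $J(s,t)$ and in Lemma \ref{ideal-J(s,t)}, we work here under the standing assumption that $Y$ is infinite, so that $n(f_{\upharpoonright_Y})$ is defined.

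First, the inclusion $J(s,t)\subseteq\bigcup_{f\in J(s,t)}J_f$ is immediate, since $f\in J_f$ for every $f\in J(s,t)$.

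For the reverse inclusion, I would set $S=\overline{\Omega}(X,Y)$ and $I=J(s,t)$, which is an ideal of $S$ by Lemma \ref{ideal-J(s,t)}. Then I would invoke the routine observation that if $f\in I$ and $(f,g)\in\mathcal{J}$, then $S^1gS^1=S^1fS^1\subseteq I$, whence $g\in I$; in other words $J_f\subseteq I$ for each $f\in I$. Taking the union over all $f\in J(s,t)$ gives $\bigcup_{f\in J(s,t)}J_f\subseteq J(s,t)$, and combining the two inclusions yields the claim.

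I expect no real obstacle here; the only point requiring care is the implicit hypothesis that $Y$ is infinite, which is what lets us apply Lemma \ref{ideal-J(s,t)}. Should one prefer an argument not routed through that lemma, one can instead proceed directly from Theorem \ref{green-j-theorem}: if $(f,g)\in\mathcal{J}$ in $\overline{\Omega}(X,Y)$ then $|Xg\setminus Y|=|Xf\setminus Y|$, and restricting the witnessing factorizations to $Y$ shows $(f_{\upharpoonright_Y},g_{\upharpoonright_Y})\in\mathcal{J}$ in $\Omega(Y)$, so that $n(g_{\upharpoonright_Y})=n(f_{\upharpoonright_Y})$ by the usual squeezing argument applied to Lemma \ref{n(alpha)}; hence $g\in J(s,t)$ whenever $f\in J(s,t)$. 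This alternative requires a little more bookkeeping but is otherwise just as direct.
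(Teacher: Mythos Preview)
Your primary argument is correct and takes a genuinely different route from the paper. The paper argues directly: for $g\in J_f$ with $f\in J(s,t)$ it invokes Theorem \ref{green-j-theorem} to obtain $|Xg\setminus Y|=|Xf\setminus Y|$ and the two families of partition inequalities, and from those inequalities it deduces by hand that $n(g_{\upharpoonright_Y})=n(f_{\upharpoonright_Y})$ (showing that each $P_x$ with $x\in N(f_{\upharpoonright_Y})$ lies inside $N(g_{\upharpoonright_Y})$, and symmetrically). Your approach instead appeals to Lemma \ref{ideal-J(s,t)} to know that $J(s,t)$ is an ideal, and then uses the general semigroup fact that ideals are saturated for $\mathcal{J}$; this is shorter and conceptually cleaner, at the cost of relying on Lemma \ref{ideal-J(s,t)}. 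Your alternative sketch, via restriction to $Y$ and a squeeze using Lemma \ref{n(alpha)}, is closer in spirit to the paper's argument, though the paper squeezes via the partition inequalities of Theorem \ref{green-j-theorem} rather than via Lemma \ref{n(alpha)}. Your remark that the hypothesis ``$Y$ infinite'' is implicit (needed for $n(\cdot)$ to be defined and for Lemma \ref{ideal-J(s,t)}) is a fair observation; the paper leaves this tacit as well.
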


\begin{proof}
Write $\bigcup_{f\in J(s, t)} J_f = J$. It is obvious that $J(s, t) \subseteq J$. To show the reverse inclusion, let $g\in J$. Then $g\in J_f$ for some $f\in J(s, t)$.  Therefore, by Theorem \ref{green-j-theorem}, we have $|Xg\setminus Y| = |Xf\setminus Y|\le t$. Also, by Theorem \ref{green-j-theorem}, there exist partitions $\{P_y\colon y\in Y\}$ and $\{Q_y\colon y\in Y\}$ of $Y$ such that, for all $y\in Y$,
\begin{equation}\label{J-class=J(s,t)}
|y(f_{\upharpoonright_Y})^{-1}|\geq \sum_{w\in P_y}|w(g_{\upharpoonright_Y})^{-1}|   \hspace{0.5cm} \text{ and }\hspace{0.5cm}|y(g_{\upharpoonright_Y})^{-1}|\geq \sum_{v\in Q_y}|v(f_{\upharpoonright_Y})^{-1}|.
\end{equation}

Now, we show that $n(g_{\upharpoonright_Y}) =n(f_{\upharpoonright_Y})$. For this, let $x\in N(f_{\upharpoonright_Y})$. Then $|x(f_{\upharpoonright_Y})^{-1}| < |Y|$. Let $w\in P_x$. Then, from the first inequality of equation (\ref{J-class=J(s,t)}), we get
$|w(g_{\upharpoonright_Y})^{-1}| < |Y|$ and so $w\in N(g_{\upharpoonright_Y})$. It follows that $P_x\subseteq N(g_{\upharpoonright_Y})$. Since $|P_x| \ge 1$, we have $|N(f_{\upharpoonright_Y})| \le |N(g_{\upharpoonright_Y})|$ and subsequently $n(f_{\upharpoonright_Y}) \le n(g_{\upharpoonright_Y})$.

\vspace{0.1cm}
Similarly, by using the second inequality of equation (\ref{J-class=J(s,t)}), we can obtain $n(g_{\upharpoonright_Y}) \le n(f_{\upharpoonright_Y})$. Thus $n(g_{\upharpoonright_Y}) =n(f_{\upharpoonright_Y})$. Since $n(f_{\upharpoonright_Y})\leq s$, it follows that $n(g_{\upharpoonright_Y})\leq s$.  Hence, by definition of $J(s,t)$, we have $g\in J(s,t)$ and so $J \subseteq J(s,t)$ as required.	
\end{proof}

\vspace{0.1cm}

\vspace{0.1cm}
For a nonempty subset $F$ of $\overline{\Omega}(X,Y)$, let
\begin{equation*}
	\begin{split}
		J(F)&=\big\{f\in \overline{\Omega}(X,Y)\colon \text{ there exist } g\in F \text{ and a partition } \{P_y\colon y\in Y\} \text{ of } Y \text{ such that }\\
		&\quad \hspace{0.4cm} |Xf\setminus Y|\leq |Xg\setminus Y|\text{ and } |y(f_{\upharpoonright_Y})^{-1}|\geq \sum_{w\in P_y}|w(g_{\upharpoonright_Y})^{-1}| \text{ for all } y\in Y \big\}.
	\end{split}
\end{equation*}

\vspace{0.2cm}
It is clear that  $F\subseteq J(F)$. 

\vspace{0.1cm}
\begin{lemma}\label{J(Z)-ideal}
If $F$ is a nonempty subset of $\overline{\Omega}(X,Y)$, then $J(F)$ is an ideal of $\overline{\Omega}(X,Y)$.
\end{lemma}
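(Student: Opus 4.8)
The plan is to verify directly that $J(F)$ is closed under composition on both sides with arbitrary elements of $\overline{\Omega}(X,Y)$. So let $f\in J(F)$ and let $h,h'\in \overline{\Omega}(X,Y)$; I must show $hfh'\in J(F)$. By definition of $J(F)$, there exist $g\in F$ and a partition $\{P_y\colon y\in Y\}$ of $Y$ with $|Xf\setminus Y|\le |Xg\setminus Y|$ and $|y(f_{\upharpoonright_Y})^{-1}|\ge \sum_{w\in P_y}|w(g_{\upharpoonright_Y})^{-1}|$ for all $y\in Y$. Since $J(F)$ quantifies existentially over $g\in F$, it suffices to produce, for the element $hfh'$, a partition of $Y$ witnessing membership against the \emph{same} $g$.

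First I would handle the image condition. As in the proof of Lemma~\ref{ideal-J(s,t)}, since $Yh'=Y$ we have $X(fh')\setminus Y\subseteq (Xf\setminus Y)h'$, and since $X(hfh')\subseteq X(fh')$ we get $|X(hfh')\setminus Y|\le |X(fh')\setminus Y|\le |(Xf\setminus Y)h'|\le |Xf\setminus Y|\le |Xg\setminus Y|$. Next I would handle the kernel/fibre condition. Write $u=(hfh')_{\upharpoonright_Y}=h_{\upharpoonright_Y}f_{\upharpoonright_Y}h'_{\upharpoonright_Y}$, all factors lying in $\Omega(Y)$. The key observation is that composing on either side with a surjection only coarsens fibres in a controlled way, and more precisely, since $f_{\upharpoonright_Y}=h_{\upharpoonright_Y}g_{\upharpoonright_Y}h'_{\upharpoonright_Y}$-style relations already exist, I can invoke \cite[Theorem 2.2]{koni-ac19} applied to $u$ and $g_{\upharpoonright_Y}$: because $f_{\upharpoonright_Y}$ already dominates $g_{\upharpoonright_Y}$ in the partition sense and $u$ is obtained from $f_{\upharpoonright_Y}$ by pre- and post-composition with elements of $\Omega(Y)$, transitivity of the preorder ``$a$ can be written $h a' h'$ with $h,h'\in\Omega(Y)$'' gives that $u$ can be written as $\beta g_{\upharpoonright_Y}\delta$ with $\beta,\delta\in\Omega(Y)$, whence by the same theorem there is a partition $\{P'_y\colon y\in Y\}$ of $Y$ with $|y u^{-1}|\ge \sum_{w\in P'_y}|w(g_{\upharpoonright_Y})^{-1}|$ for all $y\in Y$.

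More concretely, the cleanest route avoids invoking Koniecny's theorem twice: observe that $(hf)_{\upharpoonright_Y}=h_{\upharpoonright_Y}f_{\upharpoonright_Y}$, so each fibre $y(h_{\upharpoonright_Y}f_{\upharpoonright_Y})^{-1}$ is a union of fibres of $f_{\upharpoonright_Y}$ (pulled back through $h_{\upharpoonright_Y}$), hence still dominates a sub-sum of $g_{\upharpoonright_Y}$-fibres by merging the corresponding $P_y$'s; then post-composing with $h'_{\upharpoonright_Y}$, each fibre $y u^{-1}=\bigcup_{z\in y(h'_{\upharpoonright_Y})^{-1}} z(h_{\upharpoonright_Y}f_{\upharpoonright_Y})^{-1}$ is again a union, so the new partition $P'_y$ is obtained by unioning the relevant $P_z$'s over $z\in y(h'_{\upharpoonright_Y})^{-1}$, and since $h'_{\upharpoonright_Y}$ is surjective these sets partition $Y$. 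This yields $|yu^{-1}|=\sum_{z\in y(h'_{\upharpoonright_Y})^{-1}}|z(h_{\upharpoonright_Y}f_{\upharpoonright_Y})^{-1}|\ge \sum_{z}\sum_{w\in P_z'' }|w(g_{\upharpoonright_Y})^{-1}|=\sum_{w\in P'_y}|w(g_{\upharpoonright_Y})^{-1}|$, as required. Together with the image inequality and the witness $g\in F$, this shows $hfh'\in J(F)$.

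I expect the main obstacle to be the bookkeeping in combining the partitions $\{P_y\}$ through the two surjections $h_{\upharpoonright_Y}$ and $h'_{\upharpoonright_Y}$ while ensuring the resulting collection is genuinely a partition of $Y$ (disjointness and covering), and checking the fibre-inequalities survive the merging without double-counting; the image-condition half is routine and parallels Lemma~\ref{ideal-J(s,t)} verbatim. Nonemptiness is immediate since $F\ne\emptyset$ and $F\subseteq J(F)$.
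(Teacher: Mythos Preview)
Your proposal is correct and follows essentially the same route as the paper: verify the image inequality exactly as you indicate, then for the fibre condition build the new partition $P'_y=\bigcup_{z\in y(h'_{\upharpoonright_Y})^{-1}}P_z$, which is precisely the paper's $Q_y$. The paper streamlines the computation into a single chain $|y(h_{\upharpoonright_Y}f_{\upharpoonright_Y}h'_{\upharpoonright_Y})^{-1}|\ge |y(h'_{\upharpoonright_Y})^{-1}(f_{\upharpoonright_Y})^{-1}|=\sum_{x\in y(h'_{\upharpoonright_Y})^{-1}}|x(f_{\upharpoonright_Y})^{-1}|\ge\sum_{w\in Q_y}|w(g_{\upharpoonright_Y})^{-1}|$, absorbing the effect of $h_{\upharpoonright_Y}$ in the first inequality rather than treating $hf$ as a separate stage; note in particular that $y(h_{\upharpoonright_Y}f_{\upharpoonright_Y})^{-1}$ is the $h_{\upharpoonright_Y}$-preimage of the \emph{single} $f_{\upharpoonright_Y}$-fibre over $y$ (not a union of several $f$-fibres), so no merging of $P$-blocks is needed at that stage---the same $P_y$ already works.
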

\begin{proof}
Note that $F\subseteq J(F)$. Since $F \neq \emptyset$, we have $J(F)\neq \emptyset$. To show that $J(F)$ is an ideal of $\overline{\Omega}(X,Y)$, let $f\in J(F)$ and $h,h'\in \overline{\Omega}(X,Y)$. Then, by definition of $J(F)$, there exist $g\in F$ and a partition $\{P_y\colon y\in Y\}$ of $Y$ such that $|Xf\setminus Y|\leq |Xg\setminus Y|$ and for all  $y\in Y$ 
\begin{equation}\label{ineq-JF}
	|y(f_{\upharpoonright_Y})^{-1}|\geq \sum_{w\in P_y}|w(g_{\upharpoonright_Y})^{-1}|. 
\end{equation}

First, note that $Yh' = Y$ and $X(hfh') \subseteq X(fh')$. Therefore we obtain
\[|X(hfh')\setminus Y|\leq|X(fh')\setminus Yh'|\le |(Xf\setminus Y)h'|\leq |Xf\setminus Y| \leq |Xg\setminus Y|.\]	 

Second, let $y\in Y$. Since  $f_{\upharpoonright_Y},h_{\upharpoonright_Y}, h'_{\upharpoonright_Y}\in \Omega(Y)$, we obtain 
\begin{align*}
|y(h_{\upharpoonright_Y}f_{\upharpoonright_Y}h'_{\upharpoonright_Y})^{-1}|
&=|(y(h_{\upharpoonright_Y}')^{-1}(f_{\upharpoonright_Y})^{-1})(h_{\upharpoonright_Y})^{-1}|
\\&\geq|y(h_{\upharpoonright_Y}')^{-1}(f_{\upharpoonright_Y})^{-1}|\\
&=\sum_{x\in y(h'_{\upharpoonright_Y})^{-1}} |x(f_{\upharpoonright_Y})^{-1}|\\
&\geq \sum_{x\in y(h'_{\upharpoonright_Y})^{-1}}\sum _{w\in P_x}|w(g_{\upharpoonright_Y})^{-1}| \hspace{1.0cm}\text{ by equation } (\ref{ineq-JF})\\
&= \sum _{w\in Q_y}|w(g_{\upharpoonright_Y})^{-1}|, \hspace{0.5cm}\text{ where } \hspace{0.2cm} Q_y = \bigcup_{x\in y(h'_{\upharpoonright_Y})^{-1}} P_x.
\end{align*}
Observe that $\{Q_y\colon y\in Y\}$ is a partition of $Y$. 
Thus, by definition of $J(F)$, we have $hfh'\in J(F)$ as required. 
\end{proof}

\vspace{0.1cm}

\begin{lemma}\label{J(Z)=Z}
If $F$ is an ideal of $\overline{\Omega}(X,Y)$, then $F = J(F)$.
\end{lemma}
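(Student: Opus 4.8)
The plan is to prove both inclusions, of which $F \subseteq J(F)$ is already noted in the excerpt (taking $g = f$ and the partition into singletons $\{P_y = \{y\}\}$ works immediately, since $|Xf \setminus Y| \le |Xf \setminus Y|$ and $|y(f_{\upharpoonright_Y})^{-1}| \ge |y(f_{\upharpoonright_Y})^{-1}|$). So the real content is the reverse inclusion $J(F) \subseteq F$, which is exactly where the ideal hypothesis on $F$ is used.

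For $J(F) \subseteq F$, I would take $f \in J(F)$ and unpack the definition: there exist $g \in F$ and a partition $\{P_y : y \in Y\}$ of $Y$ with $|Xf\setminus Y| \le |Xg \setminus Y|$ and $|y(f_{\upharpoonright_Y})^{-1}| \ge \sum_{w \in P_y} |w(g_{\upharpoonright_Y})^{-1}|$ for all $y \in Y$. The goal is to produce $h, h' \in \overline{\Omega}(X,Y)$ with $f = h g h'$; then since $F$ is an ideal and $g \in F$, we get $f = hgh' \in F$. The existence of such $h, h'$ is precisely the content of Lemma~\ref{lemma-j-green} applied in the direction ``conditions (i) and (ii) imply $f = hgh'$'': condition (i) of that lemma is our inequality $|Xf\setminus Y| \le |Xg\setminus Y|$, and condition (ii) is our partition inequality. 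So $J(F) \subseteq F$ follows directly by invoking Lemma~\ref{lemma-j-green}. Combined with $F \subseteq J(F)$, this gives $F = J(F)$.

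I do not anticipate a serious obstacle here: the whole statement is essentially a repackaging of Lemma~\ref{lemma-j-green}, and the only thing one must be slightly careful about is matching the asymmetric definition of $J(F)$ (which only refers to one partition $\{P_y\}$, not two) against Lemma~\ref{lemma-j-green}, whose hypotheses (i) and (ii) are likewise one-sided. The point is that $J(F)$ is defined so that membership only requires the ``$f$ is below $g$'' direction, and that is exactly what Lemma~\ref{lemma-j-green} needs in order to write $f = hgh'$; no converse inequality is required because we are not asserting $g \in J(\{f\})$ or anything symmetric. One minor check worth spelling out explicitly is that $g \in F$ really does lie in $F$ (immediate) and that $h, h' \in \overline{\Omega}(X,Y)$ so that $hgh'$ is a product of elements witnessing the ideal property — both are handed to us by Lemma~\ref{lemma-j-green}. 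So the proof is short: cite $F \subseteq J(F)$, then for the reverse inclusion take $f \in J(F)$, extract $g \in F$ and the partition, apply Lemma~\ref{lemma-j-green} to get $f = hgh'$ with $h, h' \in \overline{\Omega}(X,Y)$, and conclude $f \in F$ since $F$ is an ideal.
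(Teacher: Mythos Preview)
Your proposal is correct and matches the paper's proof essentially line for line: both note $F\subseteq J(F)$ from the definition, then for the reverse inclusion unpack the data $(g,\{P_y\})$ from $f\in J(F)$, invoke Lemma~\ref{lemma-j-green} to obtain $h,h'\in\overline{\Omega}(X,Y)$ with $f=hgh'$, and conclude $f\in F$ since $F$ is an ideal.
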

\begin{proof}
From definition of $J(F)$, we have $F\subseteq J(F)$. To show the reverse inclusion, let $f\in J(F)$. Then, by definition of $J(F)$, there exist $g\in F$ and a partition $\{P_y\colon y\in Y\}$ of $Y$ such that $|Xf\setminus Y|\leq |Xg\setminus Y|$ and 
	$|y(f_{\upharpoonright_Y})^{-1}|\geq \sum_{w\in P_y}|w(g_{\upharpoonright_Y})^{-1}|$ for all $y\in Y$. Then, by Lemma \ref{lemma-j-green}, there exist $h,h'\in \overline{\Omega}(X,Y)$ such that $f=hgh'$. Since $F$ is an ideal of $\overline{\Omega}(X,Y)$, it follows that $f=hgh'\in F$ and so $J(F)\subseteq F$ as required.
\end{proof}

\vspace{0.1cm}
Combining Lemmas \ref{J(Z)-ideal} and \ref{J(Z)=Z}, we obtain a characterization of ideals of $\overline{\Omega}(X,Y)$ as follows. 

\begin{theorem}\label{ideal-OXY}
An ideal of $\overline{\Omega}(X,Y)$ is precisely of the form $J(F)$ for some nonempty subset $F$ of $\overline{\Omega}(X,Y)$.
\end{theorem}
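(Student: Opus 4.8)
The plan is to obtain this statement as an immediate consequence of Lemmas \ref{J(Z)-ideal} and \ref{J(Z)=Z}, which together supply the two halves of the claimed characterization. First I would record that every set of the form $J(F)$, with $F$ a nonempty subset of $\overline{\Omega}(X,Y)$, is an ideal of $\overline{\Omega}(X,Y)$; this is precisely Lemma \ref{J(Z)-ideal}, so nothing further is required for this direction.

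For the converse I would take an arbitrary ideal $I$ of $\overline{\Omega}(X,Y)$ and produce a nonempty subset $F$ of $\overline{\Omega}(X,Y)$ with $I = J(F)$. The key (and essentially only) point is that one may take $F = I$ itself: an ideal is by convention nonempty, so $F = I$ is a legitimate choice, and Lemma \ref{J(Z)=Z}, applied with this $F$, yields $I = J(I)$. Hence $I$ is of the asserted form.

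Putting the two directions together gives that the ideals of $\overline{\Omega}(X,Y)$ are exactly the sets $J(F)$ as $F$ ranges over the nonempty subsets of $\overline{\Omega}(X,Y)$, which is the statement of the theorem. I do not anticipate any genuine obstacle in this final step, since all the substance has already been absorbed into the preceding lemmas — in particular into Lemma \ref{J(Z)-ideal}, whose proof carries out the partition-refinement bookkeeping, and into Lemma \ref{J(Z)=Z}, which invokes the factorization criterion of Lemma \ref{lemma-j-green}. The only thing worth flagging explicitly is the (trivial) observation that the representing set $F$ need not be canonical or small: choosing $F$ to be the ideal itself already does the job.
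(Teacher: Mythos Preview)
Your proposal is correct and matches the paper's own approach exactly: the paper also derives the theorem immediately by combining Lemmas \ref{J(Z)-ideal} and \ref{J(Z)=Z}, the latter applied with $F=I$ to express any ideal $I$ as $J(I)$.
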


\vspace{0.1cm}
If $Y=X$, then $\overline{\Omega}(X,Y) = \Omega(X)$. The following corollary is an immediate consequence of Theorem \ref{ideal-OXY}.

\begin{corollary}\label{ideal-Omega(X)}
Let $X$ be an infinite set. Then an ideal of $\Omega(X)$ is precisely 

\begin{equation*}
	\begin{split}
		&\big\{f\in \Omega(X)\colon \text{ there exist } g\in F \text{ and a partition } \{P_x\colon x\in X\} \text{ of }X \text{ such that }\\
		&\quad \hspace{1.8cm} |xf^{-1}|\geq \sum_{w\in P_x}|w g^{-1}|
		\text{ for all } x\in X \big\}
	\end{split}
\end{equation*}
for some nonempty subset $F$ of $\Omega(X)$.
\end{corollary}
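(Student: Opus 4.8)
The plan is to obtain this as the special case $Y=X$ of Theorem~\ref{ideal-OXY}, so the ``proof'' is essentially a translation of notation. First I would recall the observation already made in the excerpt that when $Y=X$ we have $\overline{\Omega}(X,Y)=\Omega(X)$, and that for every $f\in\Omega(X)$ the restriction $f_{\upharpoonright_Y}$ is just $f$ itself, while $Xf\setminus Y=\emptyset$. Hence, for any nonempty $F\subseteq\Omega(X)$ and any $g\in F$, the first requirement in the definition of $J(F)$, namely $|Xf\setminus Y|\leq|Xg\setminus Y|$, becomes $0\leq 0$ and is vacuously true, so it can be discarded; and the second requirement $|y(f_{\upharpoonright_Y})^{-1}|\geq\sum_{w\in P_y}|w(g_{\upharpoonright_Y})^{-1}|$ for all $y\in Y$, with $\{P_y\colon y\in Y\}$ a partition of $Y$, becomes exactly $|xf^{-1}|\geq\sum_{w\in P_x}|wg^{-1}|$ for all $x\in X$, with $\{P_x\colon x\in X\}$ a partition of $X$. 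Therefore $J(F)$ coincides with the set displayed in the statement of the corollary.

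With this identification in hand, the corollary follows at once: by Theorem~\ref{ideal-OXY} applied with $Y=X$, an ideal of $\Omega(X)$ is precisely of the form $J(F)$ for some nonempty $F\subseteq\Omega(X)$, and by the preceding paragraph this is precisely the displayed set. So the only real content of the argument is checking that the two conditions defining $J(F)$ collapse to the single condition stated, which is immediate.

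I do not expect any genuine obstacle here; the one point worth a sentence of comment is the role of the hypothesis that $X$ is infinite. The equivalence itself does not need it, since Theorem~\ref{ideal-OXY} (resting on Lemmas~\ref{J(Z)-ideal} and~\ref{J(Z)=Z}) carries no finiteness assumption; the hypothesis is included only because for finite $X$ one has $\Omega(X)=S(X)$, a group whose unique ideal is itself, making the statement degenerate. I would either state the corollary and its proof without the infiniteness assumption, or keep the assumption and add a half-line remark that it merely rules out the trivial case, depending on how closely the authors wish to mirror Konieczny's setting in \cite{koni-ac19}.
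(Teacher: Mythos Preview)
Your proposal is correct and matches the paper's approach exactly: the paper states the corollary as an immediate consequence of Theorem~\ref{ideal-OXY} in the case $Y=X$, with no further argument given. Your translation of the two conditions defining $J(F)$ into the single displayed condition is precisely the intended specialization, and your remark on the role of the infiniteness hypothesis is accurate (the paper simply imposes it without comment).
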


\vspace{0.1cm}
In the next result, we determine the kernel of $\overline{\Omega}(X,Y)$.

\begin{theorem}\label{ker-OXY}
Let $Y$ be an infinite subset of $X$. Then $K(\overline{\Omega}(X,Y)) = J(0,0)$.
\end{theorem}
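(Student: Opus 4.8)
The plan is to show that $J(0,0)$ is the minimal ideal of $\overline{\Omega}(X,Y)$, which amounts to two things: first, that $J(0,0)$ is an ideal, and second, that it is contained in every ideal of $\overline{\Omega}(X,Y)$. The first is immediate from Lemma \ref{ideal-J(s,t)} applied with $s = t = 0$, since $Y$ is assumed infinite (and one should note $J(0,0)$ is nonempty, which is also part of that lemma's proof: it contains a map $f$ with $Xf = Y$ and every fibre $|y(f_{\upharpoonright_Y})^{-1}| = |Y|$). For the second, I would invoke the characterization of ideals in Theorem \ref{ideal-OXY}: an arbitrary ideal has the form $J(F)$ for some nonempty $F \subseteq \overline{\Omega}(X,Y)$, so it suffices to show $J(0,0) \subseteq J(F)$ for every such $F$. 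Unwinding the definition of $J(F)$, given $f \in J(0,0)$ I must produce some $g \in F$ and a partition $\{P_y : y \in Y\}$ of $Y$ with $|Xf \setminus Y| \le |Xg \setminus Y|$ and $|y(f_{\upharpoonright_Y})^{-1}| \ge \sum_{w \in P_y} |w(g_{\upharpoonright_Y})^{-1}|$ for all $y \in Y$.

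The key point is that membership in $J(0,0)$ is extremely restrictive: $f \in J(0,0)$ means $n(f_{\upharpoonright_Y}) \le 0$, i.e.\ $N(f_{\upharpoonright_Y}) = \emptyset$, so every fibre $|y(f_{\upharpoonright_Y})^{-1}| = |Y|$, and $|Xf \setminus Y| \le 0$, i.e.\ $Xf = Y$. So the first required inequality $|Xf \setminus Y| = 0 \le |Xg \setminus Y|$ holds trivially for any choice of $g$. For the fibre inequality, I pick any $g \in F$ (which is nonempty) and must find a partition $\{P_y\}$ of $Y$ with $\sum_{w \in P_y} |w(g_{\upharpoonright_Y})^{-1}| \le |Y|$ for each $y$; since each $|w(g_{\upharpoonright_Y})^{-1}| \le |Y|$ and $Y$ is infinite, any partition of $Y$ into at most $|Y|$ blocks each of size at most $|Y|$ will have each block-sum bounded by $|Y| \cdot |Y| = |Y| = |y(f_{\upharpoonright_Y})^{-1}|$. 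The cleanest choice is to fix a bijection — use the fact that $|Y| = |Y \times Y|$ — and distribute the singleton blocks $\{w\}$, $w \in Y$, among the $P_y$ so that each $P_y$ has cardinality $|Y|$; then $\sum_{w \in P_y}|w(g_{\upharpoonright_Y})^{-1}| \le |P_y| \cdot |Y| = |Y|$. This gives $f \in J(F)$, hence $J(0,0) \subseteq J(F)$.

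Since every ideal of $\overline{\Omega}(X,Y)$ contains $J(0,0)$ and $J(0,0)$ is itself an ideal, $J(0,0)$ is the unique minimal ideal, i.e.\ $K(\overline{\Omega}(X,Y)) = J(0,0)$. The main obstacle, such as it is, lies in the cardinal-arithmetic bookkeeping for the partition: one must be a little careful that a partition of the infinite set $Y$ into exactly $|Y|$ blocks each of cardinality $|Y|$ exists (it does, via $Y \cong Y \times Y$) and that the inequality $|P_y| \cdot \sup_w |w(g_{\upharpoonright_Y})^{-1}| \le |Y|$ is valid in the sense of cardinal sums — here using that a sum of $|Y|$ cardinals each $\le |Y|$ is $\le |Y|$ when $Y$ is infinite. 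Everything else is a direct appeal to Lemma \ref{ideal-J(s,t)} and Theorem \ref{ideal-OXY}. I would also remark that this specializes, when $Y = X$, to Konieczny's description of the kernel of $\Omega(X)$.
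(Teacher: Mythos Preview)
Your proof is correct and follows the same overall strategy as the paper: invoke Lemma \ref{ideal-J(s,t)} to see that $J(0,0)$ is an ideal, then show it sits inside every ideal by producing, for any $f\in J(0,0)$ and any $g$ in the given ideal, a partition $\{P_y\}$ of $Y$ witnessing $f\in J(F)$ (equivalently $f\in J(I)$ via Lemma \ref{J(Z)=Z}).

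The one difference worth noting is your choice of partition. You build a partition of $Y$ into $|Y|$ blocks each of size $|Y|$ and then appeal to infinite cardinal arithmetic to bound $\sum_{w\in P_y}|w(g_{\upharpoonright_Y})^{-1}|\le |Y|\cdot|Y|=|Y|$. This works, but it is more than is needed: the paper simply takes the singleton partition $P_y=\{y\}$, for which the required inequality reduces to $|y(f_{\upharpoonright_Y})^{-1}|=|Y|\ge |y(g_{\upharpoonright_Y})^{-1}|$, true for any $g\in\overline{\Omega}(X,Y)$. That choice dissolves the ``obstacle'' you flag at the end---no bijection $Y\cong Y\times Y$ or bound on infinite cardinal sums is required.
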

\begin{proof}
From Lemma \ref{ideal-J(s,t)}, the subset $J(0,0)$ is an ideal of $\overline{\Omega}(X,Y)$. To show that $J(0,0)$ is minimal, let $I$ be an ideal of $\overline{\Omega}(X,Y)$ that is contained in $J(0,0)$. Let $f\in J(0,0)$. Then, by definition of $J(0,0)$, we have $|Xf\setminus Y|=0$ and $n(f_{\upharpoonright_Y})=0$. 

\vspace{0.1cm}
Let $g\in I$. Obviously $|Xg\setminus Y|\ge 0$. Since $|Xf\setminus Y|=0$, it is clear that $|Xf\setminus Y|\leq |Xg\setminus Y|$. Let $y\in Y$. Since $n(f_{\upharpoonright_Y})=0$, we have $|y(f_{\upharpoonright_Y})^{-1}|=|Y|$ and so  
\[|y(f_{\upharpoonright_Y})^{-1}|=|Y|\geq \sum_{w\in \{y\}}|w(g_{\upharpoonright_Y})^{-1}|\]
for the partition $\{ \{y\}\colon y\in Y\}$ of $Y$. It follows from definition of $J(I)$ that $f\in J(I)$ and so $f\in I$ by Lemma \ref{J(Z)=Z}. Therefore $J(0,0)\subseteq I$ and hence $J(0,0) = I$ as required.
\end{proof}

\vspace{0.1cm}

If $Y=X$, then $\overline{\Omega}(X,Y) = \Omega(X)$. The following corollary is an immediate consequence of Theorem \ref{ker-OXY}, which was first proved by Konieczny \cite[Theorem 3.2(1)]{koni-ac19} in $2019$.

\begin{corollary}
If $X$ is an infinite set, then
$K(\Omega(X))=\{\alpha \in \Omega(X)\colon n(\alpha)=0\}$.
\end{corollary}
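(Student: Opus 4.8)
The plan is to obtain this corollary directly from Theorem~\ref{ker-OXY} by specializing to the case $Y = X$. First I would note that $\overline{\Omega}(X,X) = \Omega(X)$ by the very definition of $\overline{\Omega}(X,Y)$, and that, since $X$ is assumed to be infinite, the choice $Y = X$ is an infinite subset of $X$. Hence the hypotheses of Theorem~\ref{ker-OXY} are satisfied, and that theorem yields $K(\Omega(X)) = J(0,0)$.

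The remaining step is to unwind what $J(0,0)$ means in this special case. Recall that $J(s,t) = \{f\in \overline{\Omega}(X,Y)\colon n(f_{\upharpoonright_Y})\leq s \text{ and } |Xf\setminus Y|\leq t\}$. When $Y = X$, for any $f \in \Omega(X)$ the restriction $f_{\upharpoonright_Y}$ is just $f$ itself, now regarded as a surjection of $X$ onto $X$, so $n(f_{\upharpoonright_Y}) = n(f)$; moreover $Xf\setminus Y = Xf\setminus X = \emptyset$, so the second defining condition $|Xf\setminus Y|\leq 0$ holds automatically. Therefore $J(0,0) = \{f\in \Omega(X)\colon n(f)\leq 0\}$, and since $0 \leq n(f)$ for every $f \in \Omega(X)$, this set is precisely $\{f\in \Omega(X)\colon n(f) = 0\}$.

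Putting the two observations together gives $K(\Omega(X)) = \{\alpha \in \Omega(X)\colon n(\alpha) = 0\}$, which is the desired identity. I do not expect any genuine obstacle in this argument: the only points needing a moment's care are the routine identifications $\overline{\Omega}(X,X) = \Omega(X)$, $f_{\upharpoonright_X} = f$, and $Xf\setminus X = \emptyset$, together with the elementary remark that $n$ is nonnegative so that ``$n(f)\leq 0$'' is equivalent to ``$n(f) = 0$''.
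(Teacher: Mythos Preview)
Your proposal is correct and follows exactly the approach indicated in the paper, which states that the corollary is an immediate consequence of Theorem~\ref{ker-OXY} upon setting $Y = X$. The details you spell out (the identifications $\overline{\Omega}(X,X) = \Omega(X)$, $f_{\upharpoonright_X} = f$, $Xf\setminus X = \emptyset$, and the reduction of $n(f)\le 0$ to $n(f)=0$) are precisely the routine verifications the paper leaves to the reader.
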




\end{document}